\pgfplotsset{compat=1.15}
\begin{document}
	\pagestyle{myheadings}
	\newtheorem{theorem}{Theorem}[section]
	\newtheorem{lemma}[theorem]{Lemma}
	\newtheorem{corollary}[theorem]{Corollary}
	\newtheorem{proposition}[theorem]{Proposition}
	
	\theoremstyle{definition}
	\newtheorem{definition}[theorem]{Definition}
	\newtheorem{example}[theorem]{Example}
	\newtheorem{xca}[theorem]{Exercise}
	\newtheorem{algo}{Algorithm}
	\theoremstyle{remark}
	\newtheorem{remark}[theorem]{Remark}
	
	\numberwithin{equation}{section}
	\renewcommand{\theequation}{\thesection.\arabic{equation}}
	\normalsize
	
	\setcounter{equation}{0}
	
	\title{  \bf  On Separation of level sets for a pair of quadratic functions}
	
		\author{Huu-Quang Nguyen\footnote{Department of Mathematics, Vinh
			University; Nghe An, Vietnam, email:
			quangdhv@gmail.com.},\ \,   Ya-Chi Chu, and ~Ruey-Lin Sheu\footnote{Department of Mathematics, National Cheng Kung University, Tainan, Taiwan; email: rsheu@mail.ncku.edu.tw}}
	\maketitle	
	\makeatletter   \renewcommand\@biblabel[1]{#1.}
	\makeatother
	\medskip
	
\begin{quote}
\small{\bf Abstract.}\; Given a quadratic function $f(x)=x^TAx+2a^Tx+a_0,$ it is possible that its level set $\{x\in\mathbb{R}^n: f(x)=0\}$ has two connected components and thus can be separated by the level set $\{x\in\mathbb{R}^n: g(x)=0\}$ of another quadratic function $g(x)=x^TBx+2b^Tx+b_0.$ It turns out that the separation property of such kind has great implication in quadratic optimization problems and thus deserves careful studies. 
In this paper, we characterize the separation property analytically by necessary and sufficient conditions as a new tool to solving optimization problems.
%
%
%
%
\medskip
		
\vspace*{0,05in} {\bf Key words.}\; Quadratic Mapping, Level Sets, Separation, Connectedness, Convexity, Joint Numerical Range, Quadratic Optimization.
		
{\bf Mathematics Subject Classification (2010).} 90C20,	90C22, 90C26.
\end{quote}

\section{Introduction} \label{sec:Intro}

Let both $f(x)=x^TAx+2a^Tx+a_0$ and $g(x)=x^TBx+2b^Tx+b_0$ be non-constant functions, where $A,~B$ are $n\times n$ symmetric matrices, $a,b\in\mathbb{R}^n$ and $a_0,b_0\in\mathbb{R}$. In \cite[2019]{Quang-Sheu18}, Nguyen and Sheu defined the concept of separation between two sets as:
\begin{definition}[\cite{Quang-Sheu18}] \label{def-separation--}
The level set $\{x\in\mathbb{R}^n |~g(x)=0\}(\triangleq\{g=0\})$ is said to separate the set $\{x\in\mathbb{R}^n|~f(x) \star 0\}(\triangleq\{f\star0\})$, where $\star \in \{<,\le,=\}$ if there are non-empty subsets $L^-$ and $L^+$ of $\{f\star0\}$ such that
\begin{eqnarray}
&&\{f \star 0\} = L^-\cup L^+; \label{eq:def_sep-1}\\
&& g(a^-)g(a^+)<0,~\forall~ a^-\in L^-;~\forall~a^+\in L^+. \label{eq:def_sep-2}
\end{eqnarray}
\end{definition}
\noindent They proved the particular case that $\{g=0\}$ separates $\{f<0\}$ (namely, substitute $\star$ with $<$ in Definition \ref{def-separation--}) when, and only when $\{f<0\}$ has two connected components;
$g(x)$ reduces to an affine function; and the level set
$\{g=0\}\subset \{f\ge0\}.$ Immediately, it can be noted that the inclusion relation $\{g=0\}\subset \{f\ge0\}$ is closely related to the $\mathcal{S}$-lemma with equality \cite[2016]{Xia-Wang-Sheu16}, which studies under what conditions the following two statements be equivalent $\big(({\rm E_1})\sim ({\rm E_2})\big):$
\begin{itemize}
    \item[](${\rm E_1}$)~~ ($\forall x\in\mathbb R^n$)
    $~g(x)= 0~\Longrightarrow~ f(x)\ge 0.$ 
    \item[](${\rm E_2}$)~~   ($\exists \lambda\in\mathbb R$)
    ($\forall x\in\mathbb R^n$)~$f(x) + \lambda g(x)\ge0.$ 
\end{itemize}
Surprisingly, it was shown in Theorem 2 \cite[2019]{Quang-Sheu18} that $\{g=0\}$ separates $\{f<0\}$ if and only if the two statements fail to be equivalent $[\text{i.e.
}({\rm E_1})\not\sim ({\rm E_2})]$. Since the $\mathcal{S}$-lemma with equality by Xia et. al \cite[2016]{Xia-Wang-Sheu16} has been successfully used to solve the long-standing non-convex quadratic optimization problem $\min\{f(x): g(x)=0\},$ and the proof through the idea of separation of level sets as in \cite[2019]{Quang-Sheu18} is much shorter than that in the original proof by Xia et. al \cite[2016]{Xia-Wang-Sheu16}, the evidence makes us believe that the separation property of two quadratic functions is fundamental to the strong duality for non-convex quadratic optimization problems. It is therefore natural to ask the other two cases: when $\{g=0\}$ can separate $\{f\le0\};$ and when $\{g=0\}$ can separate $\{f=0\}?$

Separation of $\{g=0\}$ to $\{f=0\}$ is quite different from $\{g=0\}$ to separate $\{f<0\}$. The difference can be easily noticed by several examples.
In Figure 1, $f(x,y) = -x^2 + 4 y^2$ and $g(x,y) = 2x-y$. Then, $\{f<0\}$ consists of two open disjoint branches which share a common boundary point $(0,0).$ Then, $\{g=0\}$ passes the common boundary point and separate the two branches of $\{f<0\}$. However, $\{f=0\}$ is the intersection of two lines, which is connected and thus cannot be separated by $\{g=0\}$. In Figure 2, $f(x,y) = -x^2 + 4 y^2 - 1$ and $g(x,y) = x-5y$. Since $\{g=0\}\subset \{f<0\}$ so that
$\{g=0\}$ cannot separate $\{f<0\},$ but $\{g=0\}$ does separate the boundary
of $\{f<0\},$ which is $\{f=0\}.$ In other words, separation of $\{g=0\}$ to $\{f<0\}$ and separation of $\{g=0\}$ to $\{f=0\}$ cannot imply from one to the other.

Convexity of quadratic maps problem and S-procedure problem have attracted much attention from researchers due to their applications as well as interesting in itself. The main results in this field were obtained by Toeplitz \cite[1918]{Toeplitz18}, \cite[1919]{Hausdorff19}, Finsler \cite[1937]{Fin1937}, Dines \cite[1941]{Dines41}, Brickman \cite[1961]{Brickman61}, Yakubovich \cite[1971]{Yakubovich71}, Yuan \cite[1990]{Yuan90}, Polyak \cite[1998]{Polyak98}, Polik et al. \cite[2007]{Polik-Terlaky07}, Beck \cite[2007]{Beck07}, Jeyakumar et al. \cite[2009]{Jeyakumar-Lee-Li09}, Xia et al. \cite[2016]{Xia-Wang-Sheu16}, Flores et al. \cite[2016]{FloresBazan-Opazo16}. Various tools were developed to deal with those problems, however, the proof of strong results were very long and complicate (e.g. see the proofs in \cite{Xia-Wang-Sheu16,FloresBazan-Opazo16}). Moreover, those given tools were not enough to solve some difficult problems. Motivated by this fact, we have proposed a new one called "{\em the separation for the level sets}". This new tool  not only made the proofs shorter, simpler but also  helped us to get stronger results (e.g. see recently results in Nguyen et al. \cite[2018]{Quang-Sheu18}, Nguyen et al. \cite[2020]{Q-C-S2020}, Nguyen et al. \cite[2020]{Q-S-X2020,Q-S-S-X2020}). 


We have the following properties directly implied from the definition:

\begin{itemize}
\item If $\{g=0\}$ separates $\{f*0\}$, $g$ satisfies the two-side Slater condition. Moreover, from \eqref{eq:def_sep-2}, $ 0\not\in g(\{f*0\}).$ Therefore,
      \begin{equation}\label{S-lemma-type}
        \{g=0\} \subset \{f*0\}^c \mbox{ or equivalently, } \{f*0\}\cap \{g=0\}=\emptyset.
      \end{equation}
  \item If $\{g=0\}$ separates $\{f*0\}$, the set $\{f*0\}$ is disconnected. Suppose the contrary that $g(\{f\star0\})$ is connected, which is an interval. Since $0\not\in g(\{f*0\}),$ the interval must be either
      $g(\{f\star0\})<0$ or $g(\{f\star0\})>0.$ Both violate
      \eqref{eq:def_sep-2}, so $\{f \star 0\}$ must be disconnected.
  \item If $\{g=0\}$ separates $\{f*0\}$, the set $\{f*0\}$ has exactly two connected components. The fact that $\{f<0\}$ and $\{f\le0\}$ have two connected components has been proved by Lemma 1 and Lemma 2 in \cite[2019]{Quang-Sheu18}, respectively. The proof for $\{f=0\}$ to have at most two connected components is provided in Lemma \ref{lem:disconnected_f=0} of this article. Let us denote the two components of $\{f*0\}$ by $\tilde{L}^-,~\tilde{L}^+.$
  \item If $\{g=0\}$ separates $\{f*0\}$, since $\{f*0\}$ has two connected components $\tilde{L}^-,\tilde{L}^+$ and $0\not\in g(\{f*0\})$, both $g(\tilde{L}^-)$ and $g(\tilde{L}^+)$ are intervals not containing $0.$ From \eqref{eq:def_sep-2}, $g(\tilde{L}^-)$ and $g(\tilde{L}^+)$ cannot sit on the same side of $0.$ Thus, the two connected components of $\{f*0\}$ are indeed $L^-$ and $L^+$ in Definition \ref{def-separation}.
\end{itemize}

In summary, when $\{g=0\}$ separate $\{f*0\}$ for either $\star \in \{<,\le,=\},$
we know in general that $\{f*0\}$ has two connected components $L^-$ and $L^+$; $g$ satisfies the two-side Slater condition; $g(L^-)g(L^+)<0$ and $\{g=0\}$ lies entirely in the complement of $\{f*0\}$, see
\eqref{S-lemma-type}.

\begin{figure}
\centering
\begin{minipage}[t]{0.485\linewidth}
\centering
\captionsetup{width=\linewidth}
\definecolor{ffqqqq}{rgb}{1.,0.,0.}
\definecolor{wwccff}{rgb}{0.4,0.8,1.}
\begin{tikzpicture}[scale=0.653]
\clip(-5.4,-3.5) rectangle (5.5,3.5);
\draw [-stealth] (-5.5,0)--(5.5,0);
\draw [-stealth] (0,-3.5)--(0,3.5);

\draw[color=blue] (-2.5,0.3) node {\tiny$\{f<0\}$};
\draw[color=blue] (2.5,0.3) node {\tiny$\{f<0\}$};
\draw[color=blue] (-2.5,2.1) node {\tiny$\{f=0\}$};
\draw[color=red] (2.2,2.5) node {\tiny$\{g=0\}$};

\draw[line width=1.pt,dash pattern=on 1pt off 1pt,color=wwccff,fill=wwccff,pattern=north east lines,pattern color=wwccff](-14.4,7.2)--(-14.4,-7.2)--(0.,0.)--(-14.4,7.2);
\draw[line width=1.pt,dash pattern=on 1pt off 1pt,color=wwccff,fill=wwccff,pattern=north east lines,pattern color=wwccff](17.56,8.78)--(0.,0.)--(17.04,-8.52)--(19.68,-8.52)--(19.68,8.78);
\draw [line width=0.7pt,color=ffqqqq,domain=-14.4:19.68] plot(\x,{(-0.-2.*\x)/-1.});
\end{tikzpicture}

\caption{\small Let $f(x,y) = -x^2 + 4 y^2$ and $g(x,y) = 2x-y$. The level set $\{g=0\}$ separates $\{f<0\},$ while $\{g=0\}$ does not separate $\{f=0\}.$}
\label{fig:sep-intro-sublevel-not-level}
\end{minipage}
\hfill
\begin{minipage}[t]{0.485\linewidth}
\centering
\captionsetup{width=\linewidth}

\definecolor{ffqqqq}{rgb}{1.,0.,0.}
\definecolor{wwccff}{rgb}{0.4,0.8,1.}
\begin{tikzpicture}[scale=0.653]
\clip(-5.4,-3.5) rectangle (5.5,3.5);

\draw [-stealth] (-5.5,0)--(5.5,0);
\draw [-stealth] (0,-3.5)--(0,3.5);

\draw[color=blue] (-2.5,0.3) node {\tiny$\{f<0\}$};
\draw[color=blue] (-2.5,2.1) node {\tiny$\{f=0\}$};
\draw[color=red] (3.6,1.13) node {\tiny$\{g=0\}$};

\draw[line width=1.pt,dash pattern=on 1pt off 1pt,color=wwccff,fill=wwccff,pattern=north east lines,pattern color=wwccff](17.530062659352904,8.78)--(11.743793945312508,5.893146057128907)--(8.303625488281256,4.1818112182617195)--(6.363449707031255,3.2207720947265632)--(5.115822753906254,2.6063214111328135)--(4.244561767578129,2.180384521484376)--(3.6004492187500037,1.868370361328126)--(3.1038671875000037,1.6304901123046884)--(2.7084533691406283,1.4435827636718759)--(2.3853747558593783,1.2932531738281259)--(2.115750732421878,1.170085449218751)--(1.8866992187500031,1.067664794921876)--(1.689122314453128,0.9814703369140635)--(1.516413574218753,0.908227539062501)--(1.3636474609375029,0.8455078125000011)--(1.2270788574218778,0.7914739990234385)--(1.1038061523437528,0.7447131347656261)--(0.9915380859375027,0.7041217041015635)--(0.8884387207031277,0.668828125000001)--(0.7930187988281276,0.6381378173828135)--(0.7040466308593776,0.6114904785156261)--(0.6204980468750025,0.5884338378906261)--(0.5415039062500026,0.5686004638671885)--(0.4663183593750025,0.5516912841796886)--(0.39429321289062746,0.5374633789062511)--(0.3248596191406274,0.5257220458984385)--(0.2575024414062524,0.5163110351562511)--(0.19175903320312734,0.509110107421876)--(0.12719604492187733,0.5040283203125011)--(0.0634057617187523,0.5010040283203135)--(0.,0.5)--(-0.06340576171874775,0.5010040283203135)--(-0.12719604492187278,0.5040283203125011)--(-0.19175903320312282,0.509110107421876)--(-0.25750244140624784,0.5163110351562511)--(-0.3248596191406229,0.5257220458984385)--(-0.3942932128906229,0.5374633789062511)--(-0.46631835937499794,0.5516912841796886)--(-0.541503906249998,0.5686004638671885)--(-0.620498046874998,0.5884338378906261)--(-0.704046630859373,0.6114904785156261)--(-0.7930187988281231,0.6381378173828135)--(-0.8884387207031231,0.668828125000001)--(-0.9915380859374981,0.7041217041015635)--(-1.1038061523437481,0.7447131347656261)--(-1.2270788574218732,0.7914739990234385)--(-1.3636474609374982,0.8455078125000011)--(-1.5164135742187483,0.908227539062501)--(-1.6891223144531236,0.9814703369140635)--(-1.8866992187499985,1.067664794921876)--(-2.1157507324218736,1.170085449218751)--(-2.385374755859374,1.2932531738281259)--(-2.708453369140624,1.4435827636718759)--(-3.1038671874999992,1.6304901123046884)--(-3.6004492187499992,1.868370361328126)--(-4.244561767578125,2.180384521484376)--(-5.11582275390625,2.6063214111328135)--(-6.3634490966796875,3.2207720947265632)--(-8.303624877929689,4.1818112182617195)--(-11.74379302978516,5.893146057128907)--(-15.92,7.976716443983077)--(-15.92,7.976716443983077)--(-15.92,-7.976716547341743)--(-15.92,-7.976716547341743)--(-11.74379302978516,-5.893145751953123)--(-8.303624877929689,-4.181811523437498)--(-6.3634490966796875,-3.2207714843749984)--(-5.11582275390625,-2.6063220214843734)--(-4.244561767578125,-2.1803845214843736)--(-3.6004492187499992,-1.8683703613281235)--(-3.1038671874999992,-1.6304895019531236)--(-2.708453369140624,-1.4435827636718737)--(-2.385374755859374,-1.2932531738281237)--(-2.1157507324218736,-1.1700854492187487)--(-1.8866992187499985,-1.0676647949218736)--(-1.6891223144531236,-0.9814703369140613)--(-1.5164135742187483,-0.9082275390624988)--(-1.3636474609374982,-0.8455078124999987)--(-1.2270788574218732,-0.7914739990234363)--(-1.1038061523437481,-0.7447131347656237)--(-0.9915380859374981,-0.7041217041015613)--(-0.8884387207031231,-0.6688281249999988)--(-0.7930187988281231,-0.6381378173828113)--(-0.704046630859373,-0.6114904785156238)--(-0.620498046874998,-0.5884338378906238)--(-0.541503906249998,-0.5686004638671863)--(-0.46631835937499794,-0.5516912841796863)--(-0.3942932128906229,-0.5374633789062487)--(-0.3248596191406229,-0.5257220458984363)--(-0.25750244140624784,-0.5163110351562488)--(-0.19175903320312282,-0.5091101074218738)--(-0.12719604492187278,-0.5040283203124988)--(-0.06340576171874775,-0.5010040283203113)--(0.,-0.5)--(0.0634057617187523,-0.5010040283203113)--(0.12719604492187733,-0.5040283203124988)--(0.19175903320312734,-0.5091101074218738)--(0.2575024414062524,-0.5163110351562488)--(0.3248596191406274,-0.5257220458984363)--(0.39429321289062746,-0.5374633789062487)--(0.4663183593750025,-0.5516912841796863)--(0.5415039062500026,-0.5686004638671863)--(0.6204980468750025,-0.5884338378906238)--(0.7040466308593776,-0.6114904785156238)--(0.7930187988281276,-0.6381378173828113)--(0.8884387207031277,-0.6688281249999988)--(0.9915380859375027,-0.7041217041015613)--(1.1038061523437528,-0.7447131347656237)--(1.2270788574218778,-0.7914739990234363)--(1.3636474609375029,-0.8455078124999987)--(1.516413574218753,-0.9082275390624988)--(1.689122314453128,-0.9814703369140613)--(1.8866992187500031,-1.0676647949218736)--(2.115750732421878,-1.1700854492187487)--(2.3853747558593783,-1.2932531738281237)--(2.7084533691406283,-1.4435827636718737)--(3.1038671875000037,-1.6304895019531236)--(3.6004492187500037,-1.8683703613281235)--(4.244561767578129,-2.1803845214843736)--(5.115822753906254,-2.6063220214843734)--(6.363449707031255,-3.2207714843749984)--(8.303625488281256,-4.181811523437498)--(11.743793945312508,-5.893145751953123)--(17.008930967255242,-8.52)--(18.16,-8.52)--(18.16,8.78);
\draw [line width=0.7pt,color=ffqqqq,domain=-15.92:18.16] plot(\x,{(-0.-1.*\x)/-5.});
\begin{scriptsize}
\draw[color=ffqqqq] (-15.64,-2.81) node {$eq3$};
\end{scriptsize}

\end{tikzpicture}

\caption{\small Let $f(x,y) = -x^2 + 4 y^2 - 1$ and $g(x,y) = x-5y$. The level set $\{g=0\}$ separates $\{f=0\}$ while $\{g=0\}$ does not separate $\{f<0\}.$}
\label{fig:sep-intro-level-not-sublevel}
\end{minipage}
\end{figure}

In \cite{Quang-Sheu18}, Nguyen and Sheu find that the hypersurface $\{g=0\}$ separates the sublevel set $\{f<0\}$ only when $g$ reduces to an affine function. Here, we are going to study when the hypersurface $\{g=0\}$ separates another hypersurface $\{f=0\}$. First of all, when $\{g=0\}$ separates $\{f=0\}$, the function $g$ needs not to be affine. The following is an example.

\begin{example}
Let $f(x)=x^2-1$ and $g(x)=f(x-1)=x^2-2x$. One can directly see that the level set $\{f=0\}$ is $\{-1,1\}$ and $g(-1)g(1)<0$. By definition, $\{g=0\}$ separates $\{f=0\}$.\end{example}

However, we find that the separation between two quadratic hypersurfaces can actually be reduced to the separation of a hyperplane and a quadratic hypersurface (see Theorem \ref{thm:separation_reduce_linear}). Moreover, if $h$ is an affine function and $\{h=0\}$ separates $\{f=0\}$, the forms of both $f$ and $h$ and the relation between them are all determined, see Theorem   .
With these characterizations, we may establish a non-trivial property for the separation of two quadratic level sets: \textit{mutual separation}, that is, not only $\{g=0\}$ separates $\{f=0\}$ but also $\{g=0\}$ separates $\{f=0\}$. The following is an example of mutual separation with configuration.

\begin{example} \label{ex:mutual-sep}
Let $f(x,y) = -x^2+y^2+1$ and $g(x,y) = -x^2+y^2+2x+1$. Both the 0-level sets $\{f=0\}$ and $\{g=0\}$ are hyperbola. On one hand, left branch of $\{f=0\}$ lies in the region $\{g<0\}$ while right branch lies in the region $\{g>0\}$ (See Fig. \ref{fig:ex_mutual-sep-1}). Hence, $\{g=0\}$ separates $\{f=0\}$. On the other hand, we can also see from Figure \ref{fig:ex_mutual-sep-2} that $\{f=0\}$ separates $\{g=0\}$. Therefore, mutual separation happens for 0-level sets $\{g=0\}$ and $\{f=0\}$.

\begin{figure}
\centering
\begin{subfigure}[c]{0.49\textwidth}
	\centering
	\definecolor{qqffqq}{rgb}{0.,1.,0.}
	\definecolor{wwffqq}{rgb}{0.4,1.,0.}
	\definecolor{ccffww}{rgb}{0.8,1.,0.4}
	\definecolor{ffqqqq}{rgb}{1.,0.,0.}
	\definecolor{qqqqff}{rgb}{0.,0.,1.}
	\begin{tikzpicture}[scale=0.55]
	\clip(-6.1,-5) rectangle (6.36,5);
	\draw [samples=50,domain=-0.99:0.99,rotate around={0.:(0.,0.)},xshift=0.cm,yshift=0.cm,line width=1.pt,color=qqqqff] plot ({1.*(1+(\x)^2)/(1-(\x)^2)},{1.*2*(\x)/(1-(\x)^2)});
	\draw [samples=50,domain=-0.99:0.99,rotate around={0.:(0.,0.)},xshift=0.cm,yshift=0.cm,line width=1.pt,color=qqqqff] plot ({1.*(-1-(\x)^2)/(1-(\x)^2)},{1.*(-2)*(\x)/(1-(\x)^2)});
	\draw [samples=50,domain=-0.99:0.99,rotate around={0.:(1.,0.)},xshift=1.cm,yshift=0.cm,line width=1.pt,dash pattern=on 3pt off 3pt,color=ffqqqq] plot ({1.4142135623730951*(1+(\x)^2)/(1-(\x)^2)},{1.4142135623730951*2*(\x)/(1-(\x)^2)});
	\draw [samples=50,domain=-0.99:0.99,rotate around={0.:(1.,0.)},xshift=1.cm,yshift=0.cm,line width=1.pt,dash pattern=on 3pt off 3pt,color=ffqqqq] plot ({1.4142135623730951*(-1-(\x)^2)/(1-(\x)^2)},{1.4142135623730951*(-2)*(\x)/(1-(\x)^2)});
	\draw[line width=0.8pt,dash pattern=on 2pt off 2pt,color=ccffww,fill=ccffww,pattern=north east lines,pattern color=ccffww](-9.65,5.34)--(-9.65,-5.24)--(-7.660365447998043,-5.24)--(-6.271771030426022,-5.24)--(-5.249139785766599,-5.24)--(-4.465921783447263,-5.24)--(-3.8479508972167946,-4.615477294921873)--(-3.348886108398436,-4.088130493164061)--(-2.9382861328124985,-3.6483007812499983)--(-2.595332641601561,-3.2751208496093738)--(-2.3053097534179674,-2.953823242187499)--(-2.0575241088867178,-2.6736590576171864)--(-1.8440228271484365,-2.426616210937499)--(-1.658770751953124,-2.2065948486328115)--(-1.4971084594726554,-2.0088684082031243)--(-1.3553860473632804,-1.8297113037109367)--(-1.230705261230468,-1.6661468505859367)--(-1.1207403564453118,-1.5157635498046869)--(-1.0236062622070308,-1.3765838623046869)--(-0.9377624511718745,-1.2469659423828119)--(-0.8619421386718745,-1.1255346679687495)--(-0.7950958251953121,-1.0111230468749997)--(-0.7363537597656246,-0.9027319335937496)--(-0.6849914550781246,-0.7994976806640621)--(-0.6404052734374996,-0.7006634521484372)--(-0.6020935058593747,-0.6055609130859373)--(-0.5696423339843747,-0.5135931396484373)--(-0.5427124023437497,-0.4242187499999998)--(-0.5210296630859372,-0.33694335937499986)--(-0.5043786621093748,-0.2513067626953124)--(-0.49259765624999974,-0.16687744140624994)--(-0.48557373046874974,-0.08324157714843747)--(-0.4832397460937497,0.)--(-0.48557373046874974,0.08324157714843747)--(-0.49259765624999974,0.16687744140624994)--(-0.5043786621093748,0.2513067626953124)--(-0.5210296630859372,0.3369430541992186)--(-0.5427124023437497,0.4242187499999998)--(-0.5696423339843747,0.5135931396484373)--(-0.6020935058593747,0.6055612182617185)--(-0.6404052734374996,0.7006634521484372)--(-0.6849914550781246,0.7994976806640621)--(-0.7363537597656246,0.9027322387695309)--(-0.7950958251953121,1.0111230468749997)--(-0.8619421386718745,1.1255343627929684)--(-0.9377624511718745,1.2469659423828119)--(-1.0236062622070308,1.3765835571289056)--(-1.1207403564453118,1.5157635498046869)--(-1.230705261230468,1.6661468505859367)--(-1.3553860473632804,1.8297113037109367)--(-1.4971084594726554,2.0088681030273428)--(-1.658770751953124,2.2065951538085926)--(-1.8440228271484365,2.426616210937499)--(-2.0575241088867178,2.6736593627929675)--(-2.3053097534179674,2.953823242187499)--(-2.595332641601561,3.2751208496093738)--(-2.9382861328124985,3.6483007812499983)--(-3.348886108398436,4.088130569458006)--(-3.8479508972167946,4.615476989746091)--(-4.465921783447263,5.260827059745786)--(-5.249139785766599,5.34)--(-6.271771030426022,5.34)--(-7.660365447998043,5.34)--(-9.65,5.34);
	\draw[line width=0.8pt,dash pattern=on 2pt off 2pt,color=ccffww,fill=ccffww,pattern=north east lines,pattern color=ccffww](11.65,5.34)--(9.660364990234369,5.34)--(8.27177124023437,5.34)--(7.249139404296871,5.34)--(6.465921630859372,5.260827059745786)--(5.847950439453122,4.615476989746091)--(5.348886718749997,4.088130569458006)--(4.938286132812498,3.6483007812499983)--(4.595332031249997,3.2751208496093738)--(4.305310058593747,2.953823242187499)--(4.057524414062498,2.6736593627929675)--(3.8440228271484354,2.426616210937499)--(3.658770751953123,2.2065951538085926)--(3.4971087646484356,2.0088681030273428)--(3.355385742187498,1.8297113037109367)--(3.230704956054686,1.6661468505859367)--(3.1207403564453107,1.5157635498046869)--(3.023606567382811,1.3765835571289056)--(2.9377624511718734,1.2469659423828119)--(2.8619421386718735,1.1255343627929684)--(2.795095825195311,1.0111230468749997)--(2.7363537597656236,0.9027322387695309)--(2.6849914550781233,0.7994976806640621)--(2.6404052734374983,0.7006634521484372)--(2.6020935058593735,0.6055612182617185)--(2.5696423339843735,0.5135931396484373)--(2.5427124023437484,0.4242187499999998)--(2.521029663085936,0.3369430541992186)--(2.5043786621093735,0.2513067626953124)--(2.4925976562499987,0.16687744140624994)--(2.4855737304687486,0.08324157714843747)--(2.4832397460937488,0.)--(2.4855737304687486,-0.08324157714843747)--(2.4925976562499987,-0.16687744140624994)--(2.5043786621093735,-0.2513067626953124)--(2.521029663085936,-0.33694335937499986)--(2.5427124023437484,-0.4242187499999998)--(2.5696423339843735,-0.5135931396484373)--(2.6020935058593735,-0.6055609130859373)--(2.6404052734374983,-0.7006634521484372)--(2.6849914550781233,-0.7994976806640621)--(2.7363537597656236,-0.9027319335937496)--(2.795095825195311,-1.0111230468749997)--(2.8619421386718735,-1.1255346679687495)--(2.9377624511718734,-1.2469659423828119)--(3.023606567382811,-1.3765838623046869)--(3.1207403564453107,-1.5157635498046869)--(3.230704956054686,-1.6661468505859367)--(3.355385742187498,-1.8297113037109367)--(3.4971087646484356,-2.0088684082031243)--(3.658770751953123,-2.2065948486328115)--(3.8440228271484354,-2.426616210937499)--(4.057524414062498,-2.6736590576171864)--(4.305310058593747,-2.953823242187499)--(4.595332031249997,-3.2751208496093738)--(4.938286132812498,-3.6483007812499983)--(5.348886718749997,-4.088130493164061)--(5.847950439453122,-4.615477294921873)--(6.465921630859372,-5.24)--(7.249139404296871,-5.24)--(8.27177124023437,-5.24)--(9.660364990234369,-5.24)--(11.65,-5.24)--(11.65,5.34);
	
	\draw [color=qqqqff](1.28,4.2) node[anchor=north west] {\scriptsize$\{f=0\}$};
	\draw [color=ffqqqq](-3.2,4.7) node[anchor=north west] {\scriptsize$\{g=0\}$};

	\draw [color=red](2.9, 1.2) node[anchor=north west] {\scriptsize$\{g<0\}$};
		\draw [color=red](-4.99, 1.2) node[anchor=north west] {\scriptsize$\{g<0\}$};
	
	\draw [-stealth] (-6.5,0)--(6.35,0);
	\draw [-stealth] (0,-5)--(0,4.98);
	
	\end{tikzpicture}
	
	\caption{The shaded region is sublevel set $\{f<0\}$. The blue solid line is 0-level set $\{f=0\}$ and the red dashed line is 0-level set $\{g=0\}$.}
	\label{fig:ex_mutual-sep-1}
\end{subfigure}
\hfill
\begin{subfigure}[c]{0.49\textwidth}
	\centering

\definecolor{ccffww}{rgb}{0.8,1.,0.4}
\definecolor{qqffqq}{rgb}{0.,1.,0.}
\definecolor{wwffqq}{rgb}{0.4,1.,0.}
\definecolor{ffqqqq}{rgb}{1.,0.,0.}
\definecolor{qqqqff}{rgb}{0.,0.,1.}
\begin{tikzpicture}[scale=0.55]
\clip(-6.1,-5) rectangle (6.36,5);
\draw [samples=50,domain=-0.99:0.99,rotate around={0.:(0.,0.)},xshift=0.cm,yshift=0.cm,line width=1.pt,color=qqqqff] plot ({1.*(1+(\x)^2)/(1-(\x)^2)},{1.*2*(\x)/(1-(\x)^2)});
\draw [samples=50,domain=-0.99:0.99,rotate around={0.:(0.,0.)},xshift=0.cm,yshift=0.cm,line width=1.pt,color=qqqqff] plot ({1.*(-1-(\x)^2)/(1-(\x)^2)},{1.*(-2)*(\x)/(1-(\x)^2)});
\draw [samples=50,domain=-0.99:0.99,rotate around={0.:(1.,0.)},xshift=1.cm,yshift=0.cm,line width=1.pt,dash pattern=on 3pt off 3pt,color=ffqqqq] plot ({1.4142135623730951*(1+(\x)^2)/(1-(\x)^2)},{1.4142135623730951*2*(\x)/(1-(\x)^2)});

\draw [samples=50,domain=-0.99:0.99,rotate around={0.:(1.,0.)},xshift=1.cm,yshift=0.cm,line width=1.pt,dash pattern=on 3pt off 3pt,color=ffqqqq] plot ({1.4142135623730951*(-1-(\x)^2)/(1-(\x)^2)},{1.4142135623730951*(-2)*(\x)/(1-(\x)^2)});

\draw[line width=1.pt,dash pattern=on 2pt off 2pt,color=ccffww,fill=ccffww,pattern=north east lines,pattern color=ccffww](-9.54,5.34)--(-9.54,-5.24)--(-7.375399246215816,-5.24)--(-5.984722166061398,-5.24)--(-5.017272720336911,-4.906427001953123)--(-4.306505126953122,-4.176838989257811)--(-3.763182830810545,-3.6140759277343735)--(-3.335189208984373,-3.165989379882811)--(-2.990048217773436,-2.800069580078124)--(-2.706471557617186,-2.494992675781249)--(-2.4699267578124986,-2.2361883544921866)--(-2.2701571655273423,-2.0133587646484368)--(-2.099719238281249,-1.8190167236328116)--(-1.9530838012695302,-1.6475848388671868)--(-1.8260595703124989,-1.494822387695312)--(-1.7154147338867178,-1.3574414062499993)--(-1.6186199951171867,-1.232854614257812)--(-1.5336700439453117,-1.118992309570312)--(-1.4589575195312492,-1.014177856445312)--(-1.3931814575195305,-0.9170361328124996)--(-1.335281677246093,-0.8264239501953121)--(-1.284386901855468,-0.7413836669921872)--(-1.2397790527343744,-0.6610992431640622)--(-1.2008645629882806,-0.5848724365234372)--(-1.1671511840820306,-0.5120953369140623)--(-1.1382321166992182,-0.4422357177734373)--(-1.113772888183593,-0.37481994628906234)--(-1.0934997558593744,-0.30942199707031237)--(-1.0771932983398431,-0.2456536865234374)--(-1.0646810913085931,-0.18315551757812493)--(-1.0558337402343745,-0.12159240722656245)--(-1.0505606079101557,-0.060643920898437474)--(-1.0488088989257807,0.)--(-1.0505606079101557,0.060643920898437474)--(-1.0558337402343745,0.12159240722656245)--(-1.0646810913085931,0.18315551757812493)--(-1.0771932983398431,0.24565338134765616)--(-1.0934997558593744,0.30942199707031237)--(-1.113772888183593,0.37481994628906234)--(-1.1382321166992182,0.44223602294921854)--(-1.1671511840820306,0.5120956420898435)--(-1.2008645629882806,0.584872131347656)--(-1.2397790527343744,0.6610992431640622)--(-1.284386901855468,0.741383361816406)--(-1.335281677246093,0.8264242553710934)--(-1.3931814575195305,0.9170358276367183)--(-1.4589575195312492,1.014177856445312)--(-1.5336700439453117,1.118992309570312)--(-1.6186199951171867,1.232854614257812)--(-1.7154147338867178,1.3574417114257806)--(-1.8260595703124989,1.4948220825195306)--(-1.9530838012695302,1.6475848388671868)--(-2.099719238281249,1.8190164184570305)--(-2.2701571655273423,2.0133587646484368)--(-2.4699267578124986,2.2361883544921866)--(-2.706471557617186,2.494992675781249)--(-2.990048217773436,2.8000692749023424)--(-3.335189208984373,3.1659892272949204)--(-3.763182830810545,3.6140759277343735)--(-4.306505126953122,4.176839218139647)--(-5.017272720336911,4.9064269256591775)--(-5.984722166061398,5.34)--(-7.375399246215816,5.34)--(-9.54,5.34);
\draw[line width=1.pt,dash pattern=on 2pt off 2pt,color=ccffww,fill=ccffww,pattern=north east lines,pattern color=ccffww](9.54,5.34)--(7.375399169921871,5.34)--(5.984721679687497,5.34)--(5.017272949218747,4.9064269256591775)--(4.306505126953122,4.176839218139647)--(3.763182983398435,3.6140759277343735)--(3.335189208984373,3.1659892272949204)--(2.990048217773436,2.8000692749023424)--(2.706471557617186,2.494992675781249)--(2.4699267578124986,2.2361883544921866)--(2.270156860351561,2.0133587646484368)--(2.099719238281249,1.8190164184570305)--(1.9530834960937489,1.6475848388671868)--(1.8260595703124989,1.4948220825195306)--(1.7154144287109365,1.3574417114257806)--(1.6186199951171867,1.232854614257812)--(1.5336700439453117,1.118992309570312)--(1.4589575195312492,1.014177856445312)--(1.3931817626953118,0.9170358276367183)--(1.3352813720703118,0.8264242553710934)--(1.2843865966796868,0.741383361816406)--(1.2397790527343744,0.6610992431640622)--(1.2008642578124993,0.584872131347656)--(1.1671508789062492,0.5120956420898435)--(1.1382324218749993,0.44223602294921854)--(1.1137731933593744,0.37481994628906234)--(1.0934997558593744,0.30942199707031237)--(1.0771936035156244,0.24565338134765616)--(1.0646813964843744,0.18315551757812493)--(1.0558337402343745,0.12159240722656245)--(1.0505609130859368,0.060643920898437474)--(1.0488085937499994,0.)--(1.0505609130859368,-0.060643920898437474)--(1.0558337402343745,-0.12159240722656245)--(1.0646813964843744,-0.18315551757812493)--(1.0771936035156244,-0.2456536865234374)--(1.0934997558593744,-0.30942199707031237)--(1.1137731933593744,-0.37481994628906234)--(1.1382324218749993,-0.4422357177734373)--(1.1671508789062492,-0.5120953369140623)--(1.2008642578124993,-0.5848724365234372)--(1.2397790527343744,-0.6610992431640622)--(1.2843865966796868,-0.7413836669921872)--(1.3352813720703118,-0.8264239501953121)--(1.3931817626953118,-0.9170361328124996)--(1.4589575195312492,-1.014177856445312)--(1.5336700439453117,-1.118992309570312)--(1.6186199951171867,-1.232854614257812)--(1.7154144287109365,-1.3574414062499993)--(1.8260595703124989,-1.494822387695312)--(1.9530834960937489,-1.6475848388671868)--(2.099719238281249,-1.8190167236328116)--(2.270156860351561,-2.0133587646484368)--(2.4699267578124986,-2.2361883544921866)--(2.706471557617186,-2.494992675781249)--(2.990048217773436,-2.800069580078124)--(3.335189208984373,-3.165989379882811)--(3.763182983398435,-3.6140759277343735)--(4.306505126953122,-4.176838989257811)--(5.017272949218747,-4.906427001953123)--(5.984721679687497,-5.24)--(7.375399169921871,-5.24)--(9.54,-5.24)--(9.54,5.34);

	\draw [color=qqqqff](1.28,4.2) node[anchor=north west] {\scriptsize$\{f=0\}$};
\draw [color=ffqqqq](-3.2,4.7) node[anchor=north west] {\scriptsize$\{g=0\}$};

\draw [color=blue](2.9, 1.2) node[anchor=north west] {\scriptsize$\{f<0\}$};
\draw [color=blue](-4.99, 1.2) node[anchor=north west] {\scriptsize$\{f<0\}$};

\draw [-stealth] (-6.5,0)--(6.35,0);
\draw [-stealth] (0,-5)--(0,4.98);

%

\end{tikzpicture}

	\caption{The shaded region is sublevel set $\{g<0\}$. The blue solid line is 0-level set $\{f=0\}$ and the red dashed line is 0-level set $\{g=0\}$.}
	\label{fig:ex_mutual-sep-2}
\end{subfigure}
\caption{Graph corresponds to Example \ref{ex:mutual-sep}.}
\label{fig:ex_mutual-sep}
\end{figure}
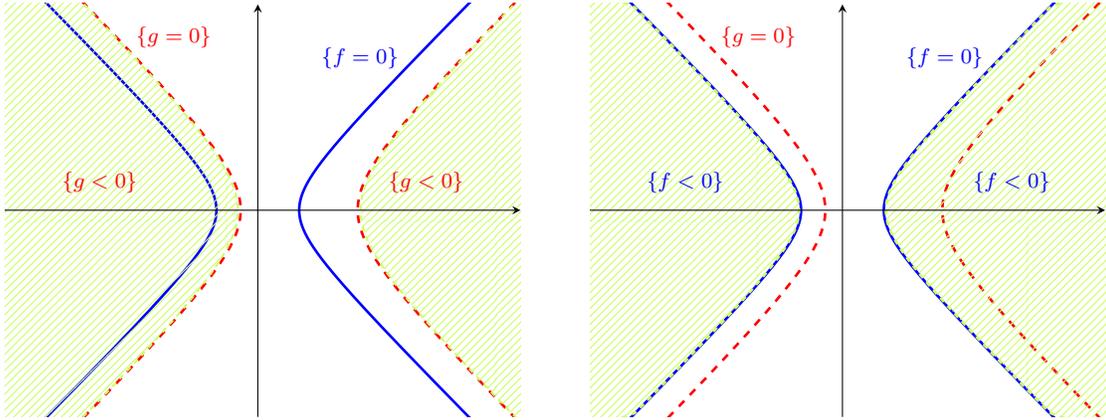
\end{example}

Next, we point out a simple property followed from the separation between two 0-level sets. We can observe that
$\{f = 0\} = \{ \gamma f = 0 \}$ for any $\gamma \neq 0$ and that
$$(\alpha f + \beta g)(u)(\alpha f + \beta g)(v) = \beta^2 g(u)g(v) ~ \text{ for all } u \in L^+, v\in L^-.$$
Hence, if $\{g=0\}$ separates $\{f=0\}$, we have $(\alpha f+\beta g)(u)(\alpha f+\beta g)(v)<0$ for all $u\in\{\gamma f=0\}^+, v\in \{\gamma f=0\}^-$. Therefore we have following proposition.

\begin{proposition}\label{prop:linear_comb_preserve_separation}
If $\{g=0\}$ separates $\{f=0\}$, then $\{\alpha f+\beta g=0\}$ separates $\{\gamma f=0\}$ for all $\alpha, \beta \in \mathbb{R}$ with $\beta \neq 0$, $\gamma \neq 0$.
\end{proposition}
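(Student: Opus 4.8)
The plan is to verify the two defining conditions of separation in Definition~\ref{def-separation--} directly for the pair $\bigl(\alpha f+\beta g,\ \gamma f\bigr)$, exploiting the elementary fact that $f$ vanishes identically on $\{f=0\}$. Write $h:=\alpha f+\beta g$. First I would note that since $\gamma\neq 0$, one has $\{\gamma f=0\}=\{f=0\}$ as point sets. The hypothesis that $\{g=0\}$ separates $\{f=0\}$ therefore supplies non-empty subsets $L^-,L^+$ of $\{f=0\}$ with $\{f=0\}=L^-\cup L^+$ and $g(a^-)g(a^+)<0$ for every $a^-\in L^-$ and $a^+\in L^+$. I would simply reuse this same decomposition $L^-\cup L^+$ for $\{\gamma f=0\}$, which is legitimate precisely because the underlying set is unchanged; this settles condition \eqref{eq:def_sep-1}.

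The second and essential step is the sign condition \eqref{eq:def_sep-2} for $h$. For any $a^-\in L^-$ and $a^+\in L^+$, both points lie in $\{f=0\}$, so $f(a^-)=f(a^+)=0$ and the term $\alpha f$ drops out of $h$ at these points, leaving $h(a^\pm)=\beta\,g(a^\pm)$. Consequently
\[
 h(a^-)\,h(a^+)=\beta^2\,g(a^-)g(a^+).
\]
Since $\beta\neq 0$ gives $\beta^2>0$ and the separation hypothesis gives $g(a^-)g(a^+)<0$, we conclude $h(a^-)h(a^+)<0$, which is exactly \eqref{eq:def_sep-2}. Combined with the first step, this shows that $\{h=0\}=\{\alpha f+\beta g=0\}$ separates $\{\gamma f=0\}$.

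I do not expect any genuine obstacle: the entire argument collapses to the algebraic identity recorded just before the statement, namely that $\alpha f+\beta g$ coincides with $\beta g$ on the zero set of $f$. The only points that deserve a word of care are (i) that scaling $f$ by the nonzero constant $\gamma$ leaves $\{f=0\}$ intact, so the two pieces $L^-,L^+$ are genuinely inherited, and (ii) that the value of $\alpha$ is immaterial because $\alpha f$ contributes nothing on $\{f=0\}$. In particular, no appeal to connectedness, to the geometric shape of the components, or to the structural lemmas cited earlier is required, since separation is witnessed pointwise by the sign of a single product.
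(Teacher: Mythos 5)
Your argument is correct and is essentially identical to the paper's own justification, which consists precisely of the observations that $\{\gamma f=0\}=\{f=0\}$ and that $(\alpha f+\beta g)(u)(\alpha f+\beta g)(v)=\beta^2 g(u)g(v)<0$ for $u\in L^+$, $v\in L^-$. Nothing is missing.
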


As shown in \cite{Quang-Sheu18}, for a non-constant quadratic function $f(x)$ defined on $\mathbb{R}^n$, after the appropriate change of variables, we may assume that $f(x)$ adopts one of the following five canonical forms:
\begin{eqnarray}
&& -x_1^2-\cdots - x_k^2+\delta (x_{k+1}^2+\cdots +
x_m^2)+\theta; \label{form:1}\\
&&-x_1^2-\cdots - x_k^2+\delta(x_{k+1}^2+\cdots + x_m^2)-1;\label{form:2}\\
&&-x_1^2-\cdots - x_k^2+\delta (x_{k+1}^2+\cdots+ x_m^2)+x_{m+1};\label{form:3}\\
&&\hskip8pt x_1^2+\cdots + x_m^2+\delta x_{m+1}+c';\label{form:4}\\
&& \hskip8pt \delta x_1+c', \label{form:5}
\end{eqnarray}
where $\delta, {\theta\in \{0, 1\}}$.

Before proceeding, we first quote two results from \cite{Quang-Sheu18}. One is a direct consequence of the Intermediate Value Theorem. The other specifies the form of $f$ adopts when the sublevel sets $\{f < 0\}$ or $\{f \leq 0\}$ have disconnected components.

\begin{lemma}[\cite{Quang-Sheu18}]\label{lem:IVT}
Let $C$ be a connected set in $\mathbb{R}^n$, $h$ be continuous on $C$ and \begin{equation}\label{e001}
C\cap \{h=\alpha_0\}=\emptyset
\end{equation} then $(h(x)-\alpha_0) \cdot (h(y)-\alpha_0)>0~ \forall x, y \in C.$
\end{lemma}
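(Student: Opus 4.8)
The plan is to reduce the claim to the elementary fact that the continuous image of a connected set is connected, which is precisely the topological form of the Intermediate Value Theorem. First I would observe that hypothesis \eqref{e001} says exactly that $\alpha_0\notin h(C)$, i.e., $h$ never attains the value $\alpha_0$ on $C$. The asserted inequality $(h(x)-\alpha_0)(h(y)-\alpha_0)>0$ for all $x,y\in C$ is then equivalent to saying that the scalar quantity $h(\cdot)-\alpha_0$ keeps a constant (nonzero) sign throughout $C$: once such a constant sign is established, the product of any two values is automatically positive, and it is nonzero because neither factor vanishes by \eqref{e001}.

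Next I would carry out the main step. Since $h$ is continuous on the connected set $C$, the image $h(C)\subseteq\mathbb{R}$ is connected, hence an interval $I$. By hypothesis, $\alpha_0\notin I$. An interval of $\mathbb{R}$ that omits the single point $\alpha_0$ must lie entirely in one of the two open rays $(-\infty,\alpha_0)$ or $(\alpha_0,+\infty)$; otherwise it would contain points on both sides of $\alpha_0$ and, being an interval, would be forced to contain $\alpha_0$ itself, a contradiction. In the first case $h(z)-\alpha_0<0$ for every $z\in C$, and in the second case $h(z)-\alpha_0>0$ for every $z\in C$. In either situation the sign is constant, and the desired strict inequality follows for all $x,y\in C$.

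To make the role of the Intermediate Value Theorem fully transparent, one may equivalently argue by contradiction: if some $x,y\in C$ satisfied $(h(x)-\alpha_0)(h(y)-\alpha_0)<0$ (the product cannot equal zero by \eqref{e001}), then $h(x)$ and $h(y)$ would lie strictly on opposite sides of $\alpha_0$, and the two disjoint, nonempty, relatively open pieces $C\cap\{h<\alpha_0\}$ and $C\cap\{h>\alpha_0\}$ would cover $C$, contradicting its connectedness. This route does not add anything essential, but it exhibits the separation of $C$ that the conclusion rules out.

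The only point requiring genuine care is that $C$ is assumed \emph{merely connected}, not path-connected or convex. Consequently one cannot simply join $x$ to $y$ by a path and invoke the classical one-variable Intermediate Value Theorem along that path, since such a path need not exist. The correct device is the topological statement that continuous maps send connected sets to connected (hence interval-valued) subsets of $\mathbb{R}$; the scalar IVT is merely the special case $C=[a,b]$. Once this is recognized, the proof is immediate, so I do not anticipate any substantive obstacle beyond this observation.
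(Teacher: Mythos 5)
Your proof is correct, and it is essentially the argument the paper intends: the paper states only that this lemma ``is a direct consequence of the Intermediate Value Theorem'' (citing \cite{Quang-Sheu18}), and your argument---that $h(C)$ is a connected subset of $\mathbb{R}$, hence an interval omitting $\alpha_0$, hence contained in one of the two open rays---is the standard way to make that precise. Your remark that mere connectedness (rather than path-connectedness) forces the topological form of the IVT is a sensible point of care, but it does not change the route.
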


\begin{lemma}[\cite{Quang-Sheu18}]\label{lem:disconnected_f<0}
If $f(x)$ is  a non-constant quadratic function then the following results characterize other sublevel sets of a quadratic function $f(x)$.
\begin{itemize}
\item[\rm(i)] The sublevel set $\{f <0\}$ has at most two connected components. It contains exactly two connected components if and only if $f$ has form \eqref{form:1} with $k=1$. Furthermore, the two connected components are
\begin{equation}\label{two-components}
\begin{aligned}
\{f <0\}^- &= \{x\in \{f <0\} |~ x_1<0\} \\
\{f <0\}^+ &= \{x\in \{f <0\} |~ x_1>0\}.
\end{aligned}
\end{equation}
\item[\rm(ii)] $\{f \leq 0\}$ has exactly two connected components if and only if $f$ is of form \eqref{form:1} with $k=1, \theta=1$.	
\end{itemize}
\end{lemma}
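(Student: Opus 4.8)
The plan is to argue by a case analysis over the five canonical forms \eqref{form:1}--\eqref{form:5}, since every non-constant quadratic reduces to one of them by an invertible affine change of variables, under which connectedness of sublevel sets is preserved. The one topological fact driving everything is the dichotomy for exteriors of balls: for $r\ge 0$ the set $\{y\in\mathbb{R}^d:\|y\|>r\}$ is connected when $d\ge 2$ but splits into $\{y_1>r\}$ and $\{y_1<-r\}$ when $d=1$; likewise the closed exterior $\{\|y\|\ge r\}$ is connected for $d\ge 2$, connected for $d=1$ when $r=0$ (the two rays meet at the origin), and disconnected for $d=1$ when $r>0$. I will trace every (dis)connection back to the number $k$ of negative squares and to the constant $\theta$.

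For the sufficiency in (i), take $f$ of form \eqref{form:1} with $k=1$, so $\{f<0\}=\{x:\ x_1^2>\delta(x_2^2+\cdots+x_m^2)+\theta\}$, i.e. $|x_1|>\sqrt{\delta(x_2^2+\cdots+x_m^2)+\theta}$. The hyperplane $\{x_1=0\}$ misses this set (there $f\ge 0$), splitting it into the two pieces $\{x_1>0\}$ and $\{x_1<0\}$ of \eqref{two-components}; each is a strict supergraph over the connected base in the remaining variables and hence connected, and both are nonempty, so there are exactly two components. For (ii) with the same $f$ and $\theta=1$ one gets $\{f\le 0\}=\{|x_1|\ge\sqrt{\delta(\cdots)+1}\}$ with radius $\ge 1>0$ bounded away from $0$, so the two closed rays never meet and there are two components; whereas for $\theta=0$ the radius may vanish (at the origin when $\delta=1$, identically when $\delta=0$), the two nappes fuse at the apex, and $\{f\le 0\}$ becomes connected. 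This fusion is exactly what forces the extra hypothesis $\theta=1$ in (ii).

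For the necessity, I would dispatch the remaining cases. In forms \eqref{form:3} and \eqref{form:4} with $\delta=1$ the last variable appears linearly, $\{f<0\}$ is a strict subgraph $\{x_{m+1}<\psi(x')\}$ of a continuous $\psi$ on $\mathbb{R}^m$, and the shear $(x',x_{m+1})\mapsto(x',x_{m+1}-\psi(x'))$ identifies it with a half-space, hence connected. Form \eqref{form:4} with $\delta=0$ and form \eqref{form:2} with $\delta=0$ yield a ball, all of $\mathbb{R}^n$, or $\emptyset$; form \eqref{form:5} yields a half-space. Form \eqref{form:2} with $\delta=1$ gives $\{Q(x)<1\}$ for $Q(x)=\sum_{j>k}x_j^2-\sum_{i\le k}x_i^2$, which is star-shaped about the origin because $Q(sx)=s^2Q(x)\le\max\{Q(x),0\}<1$ for $s\in[0,1]$, hence connected. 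Finally, in form \eqref{form:1}, $k=0$ forces $f\ge 0$ and $\{f<0\}=\emptyset$, while $k\ge 2$ makes the $(x_1,\dots,x_k)$-fibre the exterior of a ball in dimension $\ge 2$, giving a connected set over the connected base. Thus form \eqref{form:1} with $k=1$ is the only source of two components for $\{f<0\}$. The closed sublevel sets $\{f\le 0\}$ are handled by the same four devices (shear to a closed half-space, closed ball, star-shapedness, closed exterior of a ball), all connected, so the only additional disconnection is the $k=1$ cone kept apart by $\theta=1$, exactly matching (ii).

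The step I expect to be most delicate is the necessity direction: one must keep the constant terms and sign conventions straight across every subcase and certify that each alternative sublevel set is genuinely (path-)connected or empty, with the closed-set bookkeeping pinning down precisely when the bounding radius is bounded away from zero. The boundary case $\theta=0$ in (ii) --- where the two cone nappes coalesce at a single apex --- is the easiest to overlook and is the crux separating statement (i) from statement (ii).
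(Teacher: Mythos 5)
The paper does not actually prove this lemma; it is quoted verbatim from \cite{Quang-Sheu18}, so there is no in-paper proof to compare against. Judged on its own, your argument is correct and is in the same spirit as the source (and as the paper's own proof of the companion Lemma \ref{lem:disconnected_f=0}): reduce to the five canonical forms, dispose of forms \eqref{form:2}--\eqref{form:5} and of form \eqref{form:1} with $k\neq 1$ by exhibiting connectedness (shear onto a half-space, ball/cylinder, star-shapedness about the origin, exterior of a ball in dimension $\ge 2$), and isolate $k=1$ as the only source of disconnection, with $\theta=1$ needed in (ii) to keep the two closed nappes from meeting at the apex. The one place where you assert more than you argue is form \eqref{form:1} with $k\ge 2$: ``a connected set over a connected base'' with connected fibres is not automatically connected, so you should add a line --- e.g., the map $(y,z)\mapsto\bigl(\tfrac{y}{\|y\|}(\|y\|-r(z)),\,z\bigr)$ is a homeomorphism of $\{\|y\|>r(z)\}$ onto $(\mathbb{R}^k\setminus\{0\})\times\mathbb{R}^{n-k}$, which is connected for $k\ge2$ (this is exactly the ``rotation argument'' the paper invokes in proving Lemma \ref{lem:disconnected_f=0}), and the closed sublevel case follows since $\{\|y\|\ge r(z)\}$ lies between that connected set and its closure. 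With that line supplied, the proof is complete; your emphasis on the $\theta=0$ coalescence at the apex as the crux distinguishing (i) from (ii) is exactly right.
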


\begin{remark}\label{rmk:f_form_f=0} According to Lemma \eqref{lem:disconnected_f<0}, if the set $\{f \leq 0\}$ is disconnected,
the quadratic function $f$ must be of form (\ref{form:1}) with $k=1,\theta=1$. Then, $-f$ cannot, again, adopt (\ref{form:1}) with $k=1,\theta=1.$ Hence, when $\{f \leq 0\}$ is disconnected, $ \{-f \leq 0\}=\{f\ge0\}$ must be connected.
\end{remark}

In the next section, we study the property of disconnected 0-level set $\{f=0\}$ and the separation between two 0-level sets.

\section{Separation Theory for Level Set} \label{sec:SeparationTheory}
The following Lemma \ref{lem:disconnected_f=0} shows that the disconnectedness of $\{f = 0\}$ can be reflected in the disconnectedness of $\{f \leq 0\}$ or $\{f \geq 0\}$.

\begin{lemma} \label{lem:disconnected_f=0} Let  $f(x)$ be a non-constant quadratic function. Then $\{f = 0\}$ is disconnected if and only if either $\{f \leq 0\}$ or  $\{f \geq 0\}(= \{-f \leq 0\})$ is disconnected. When $\{f = 0\}$ is disconnected, it has exactly two connected components.
\end{lemma}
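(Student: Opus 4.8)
The plan is to reduce everything to the five canonical forms \eqref{form:1}--\eqref{form:5} of $f$ and to apply Lemma \ref{lem:disconnected_f<0}(ii) to both $f$ and $-f$. The logical backbone is the single equivalence
$$\{f=0\}\ \text{is disconnected}\ \Longleftrightarrow\ f\ \text{or}\ {-f}\ \text{reduces to form}~\eqref{form:1}~\text{with}~k=1,\ \theta=1.$$
Granting this, the theorem follows at once: by Lemma \ref{lem:disconnected_f<0}(ii), the assertion ``$f$ is of form \eqref{form:1} with $k=1,\theta=1$'' is equivalent to ``$\{f\le0\}$ is disconnected'', and the same assertion for $-f$ is equivalent to ``$\{-f\le0\}=\{f\ge0\}$ is disconnected''. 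Since $\{f=0\}=\{-f=0\}$, the two sides of the stated biconditional match. Thus the real content is the displayed equivalence, which I verify by inspecting the forms; the explicit descriptions obtained along the way will also yield the ``exactly two components'' claim.

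For the implication ``$\Leftarrow$'' I would use symmetry. Because $\{f=0\}=\{-f=0\}$, it suffices to treat the case where $\{f\le0\}$ is disconnected. By Remark \ref{rmk:f_form_f=0} this forces $f=-x_1^2+\delta(x_2^2+\cdots+x_m^2)+1$. Solving $f=0$ gives $x_1=\pm\sqrt{\delta(x_2^2+\cdots+x_m^2)+1}$, whose radicand is always $\ge1$. Hence $\{f=0\}$ splits as the disjoint union of the branch with $x_1\ge1$ and the branch with $x_1\le-1$, separated by the slab $\{|x_1|<1\}$; each branch is the graph of a continuous function over $(x_2,\dots,x_n)\in\mathbb{R}^{n-1}$, hence connected. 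This simultaneously shows $\{f=0\}$ is disconnected with exactly two components, settling both ``$\Leftarrow$'' and the count in this case.

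For ``$\Rightarrow$'' I would run through the canonical forms and compute $\{f=0\}$ directly. Forms \eqref{form:3}, \eqref{form:5}, and form \eqref{form:4} with $\delta=1$ present $\{f=0\}$ as the graph of a function over a coordinate hyperplane, hence connected; form \eqref{form:1} with $\theta=0$ gives a cone (or linear subspace) through the origin, connected because it is invariant under scaling toward $0$ via the path $t\mapsto tx$. The remaining genuinely quadratic level sets have one of two shapes: either a single squared coordinate is isolated and equals $\pm\sqrt{(\text{nonnegative})+(\text{positive constant})}$, producing two disjoint graph-branches; or at least two squared coordinates of the minority sign survive, in which case $\{f=0\}$ is a sphere $S^{r}$ with $r\ge1$ (cross a Euclidean factor) or a hyperboloid of one sheet, both connected. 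Tracking these, disconnection occurs exactly for form \eqref{form:1} with $k=1,\theta=1$, for form \eqref{form:2} with $\delta=1$ and a single positive square, and for form \eqref{form:4} with $\delta=0,\,m=1,\,c'<0$; in each case the two-branch description again gives exactly two components, and in the latter two cases $-f$ reduces (after a positive rescaling) to form \eqref{form:1} with $k=1,\theta=1$, so $\{f\ge0\}$ is disconnected.

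The step I expect to be the main obstacle is the connectedness half of ``$\Rightarrow$'': rigorously confirming that the level sets carrying two or more squared coordinates of the minority sign---the spheres $S^{r}$ with $r\ge1$ and, above all, the hyperboloids of one sheet $\sum_{i\le k}x_i^2=\sum_{i>k}x_i^2+1$ with $k\ge2$---are genuinely connected, as well as the cone cases with $\theta=0$. A second, more bookkeeping-type subtlety is to attribute each disconnected configuration to the correct side, $\{f\le0\}$ versus $\{f\ge0\}$: a function already in canonical form \eqref{form:2} or \eqref{form:4} can have disconnected $\{f=0\}$ without itself being form \eqref{form:1} with $k=1,\theta=1$, so one must pass to $-f$ (and permit a positive scaling) to see the trigger. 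This is exactly why the theorem must be phrased symmetrically in $\{f\le0\}$ and $\{f\ge0\}$.
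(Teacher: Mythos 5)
Your proposal is correct and follows essentially the same route as the paper: reduce $f$ to the canonical forms \eqref{form:1}--\eqref{form:5}, classify $\{f=0\}$ by the signature of $A$ and the presence of linear terms, observe that disconnection occurs exactly when $f$ or $-f$ reduces to form \eqref{form:1} with $k=1,\theta=1$ (the two-sheet cases, each sheet a graph, hence exactly two components), and convert back via Lemma \ref{lem:disconnected_f<0}(ii). The only difference is cosmetic: where you verify connectedness of the cones, spheres $S^r\times\mathbb{R}^{n-k}$ with $r\ge1$, one-sheet hyperboloids, and graph-type sets directly from their product/graph structure, the paper uses an explicit scaling curve to dispose of the linear-term forms and a ``rotation argument'' plus hyperbola-tracing for the remaining signatures.
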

\begin{proof}
[Proof for sufficiency] If $\{f \leq 0\}$ or $\{-f \leq 0\}$ is disconnected, by Lemma \ref{lem:disconnected_f<0}, $f(x)$ or $-f(x)$ has the form: $-x_1^2+\delta(x_2^2+\cdots +x_m^2)+1$. To verify that $\{f=0\}$ is disconnected when $f$ adopting these forms, we can use the same argument of checking the disconnectedness of $\{f<0\}$ in Lemma 1 in \cite{Quang-Sheu18}.

\vskip 0.25cm
\noindent{[Proof for necessity]} We first claim that after some linear transformations to put $f(x)$ to canonical form, $f(x)$ cannot have non-vanished linear terms; otherwise, $\{f = 0\}$ will be connected. If the canonical form of $f(x)$ had non-vanished linear terms, by our classification in the beginning of Section \ref{sec:SeparationTheory}, it had form (\ref{form:3}), form (\ref{form:4}) with $\delta\ne 0$, or form (\ref{form:5}) with $\delta\ne 0$. It means $f(x)$ has form $\rho(-x_1^2-\cdots-x_k^2)+\rho'(x_{k+1}^2+\cdots+x_{m}^2)+x_{m+1}$ where $\rho, \rho'\in\{0, 1\}$. Obviously, any point $u=(u_1, \cdots, u_n)\in \{f=0\}$ can connect to  $0\in \{f = 0\}$ by the curve $\gamma(t)=(tu_1, \cdots, tu_m, t^2u_{m+1}, \cdots, t^2u_n)~\text{with } t \in [0,1]$ in $\{f=0\}$. This establishs the claim.

Now, we know $f(x)=\rho(-x_1^2-\cdots -x_k^2)+\rho'(x_{k+1}^2+\cdots+x_{m}^2)+\gamma$, where $\rho,\rho'\in\{0, 1\}$ ($f(x)$ has no linear terms). Without loss of generality, when $\gamma>0$, we choose $\gamma=1$; and choose $\gamma=-1$ for $\gamma<0$. We shall see that the choice will not affect any topological properties of the level sets of $f(x)$.
Let $\lambda_{-}$ and $\lambda_{+}$ be the number of negative and positive eigenvalues of $A$ respectively. The following discussion is divided into two cases: either ($\lambda_{-}\geq 2$ and $\lambda_{+}\geq 2$) or ($\lambda_{-}\le 1$ or $\lambda_{+}\le 1$).

\noindent $\bullet$ For ($\lambda_{-}\geq 2$ and $\lambda_{+}\geq 2$):

In this case, $k \ge 2$ and $m \ge k+2$. Since the number of negative and positive eigenvalues are both larger than $2$, we can use the rotation argument described in the proof of Lemma \ref{lem:disconnected_f<0} in \cite{Quang-Sheu18} to construct a ``piecewise arc curve'', which connects any given point $u=(u_1, \cdots, u_n)^T$ in $\{f = 0\}$ to a point $\bar{u}=(u_1^*, 0, \cdots, 0, u_{k+1}^*, 0, \cdots, 0)^T$ in the intersection of $\{f = 0\}$ and the plane spanned by the first coordinate $x_1$ and the $(k+1)^{th}$ coordinate $x_{k+1}$, where $u_1^*=\sqrt{u_1^2+\cdots +u_k^2}$ and $u_{k+1}^*=\sqrt{u_{k+1}^2+\cdots +u_m^2}$ satisfying $-(u^*_1)^2+(u_{k+1}^*)^2+\gamma=0$.

If we view on this plane, we can find that the point $\bar{u}$ always lie on a half of a piece of hyperbola (when $\gamma \neq 0$) or a $45$-degree ray (when $\gamma = 0$) in the first quadrant. Therefore, after linking the given point $u$ to $\bar{u}$, we can further trace the hyperbola or the straight line to link $\bar{u}$ to the axis ($x_1$ or $x_{k+1}$). Which axis we should link to depends on the sign of the constant $\gamma$:

-When $\gamma$ is negative, say $\gamma=-1$, the curve $$\Gamma_-(t)=(t, 0, \cdots, 0, \sqrt{t^2+1},0, \cdots,  0)^T,\ \ \text{where } t \in [0,u_{1}^{\ast}]$$ can connects $\bar{u}=\Gamma_-(u^*_1)$ to the point $\Gamma_-(0)=(0, \cdots, 0, 1, 0, \cdots, 0)^T$ on the $(k+1)^{th}$ axis. The square root $\sqrt{t^2+1}$ in $\Gamma_-(t)$ sits in the $(k+1)^{th}$ component.

-When $\gamma$ is positive, say $\gamma=1$, the curve $$\Gamma_+(t)=(\sqrt{t^2+\gamma}, 0, \cdots, 0, t, 0, \cdots,  0)^T,\ \ \text{where } t \in [0,u_{k+1}^{\ast}] $$ can connects $\bar{u}=\Gamma_+(u^*_{k+1})$ to the point $\Gamma_+(0)=(1, \cdots, 0, \cdots, 0)^T$ on the first axis.

-When $\gamma = 0$, note that $(u^*_1)^2 = (u_{k+1}^*)^2$ and the curve $$\Gamma_0(t)=(t, 0, \cdots, 0, t, 0, \cdots,  0)^T,\ \ \text{where } t \in [0,u_{1}^{\ast}] $$ can connects $\bar{u}=\Gamma_0(u^*_{1})$ to the origin $\Gamma_0(0)=(0, \cdots, 0)^T$.

\noindent Since any point $u$ in $\{f = 0\}$ can be first connected to $\bar{u}$ and hence to $\Gamma_-(0)$ or $\Gamma_+(0)$ or the origin, $\{f = 0\}$ is connected. In summary, for the case ($\lambda_{-}\geq 2$ and $\lambda_{+}\geq 2$), $\{f = 0\}$ must be connected.

\noindent $\bullet$  For ($\lambda_{-}\le1$ or $\lambda_{+}\le1$):

\noindent\emph{Case 1:} $\lambda_{-}=0$. Namely,
$f(x)=x_1^2+\cdots +x_m^2+\gamma$. When $\gamma \geq 0$,
$\{f = 0\}$ is connected since $\{f = 0\}$ is either an empty set or an $(n-m)$-dimensional subspace of $\mathbb{R}^n$. Therefore, we just need to consider the case $\gamma = -1$. When $\gamma=-1$ and $m\geq 2$, that is, the number of positive eigenvalues is larger than $2$, we can use the rotation argument again to connect any point in $\{f = 0\}$ to a fixed point $(1,0,\cdots,0)$ on the first axis. Hence, $\{f = 0\}$ is connected. When
$\gamma=-1$ and $m=1$, $\{f = 0\}=\{x\in \mathbb{R}^n |~ x_1^2=1
\}$ is disconnected. In this case, $-f(x)$ is of form (\ref{form:1}) with $k=1$ and $\theta=1$, and Lemma \ref{lem:disconnected_f<0} (ii) implies that $\{-f \leq 0\}=\{f \geq 0\}$ is disconnected. In summary, $\{f = 0\}$ is disconnected for the case $\lambda_{-}=0$ if and only if $\{-f \leq 0\}=\{f \geq 0\}$ is disconnected.

\noindent\emph{Case 2:} $\lambda_-=1$. In this case, $f(x) = -x_1^2+\delta(x_2^2\cdots+x_m^2)+\gamma=0$ with $m \geq 2$.

\noindent - When $\gamma=0$, the level set $\{f=0\}$ can be rewritten as $\{x_1^2 = x_2^2\cdots+x_m^2\}$. We can link any point $x$ to the origin by the line $L(t) = tx$ with $t \in [0,1]$. Hence, $\{f = 0\}$ is connected. In this case, one can see that the cross section of $\{f=0\}$ with cutting plane $\{x_1=k\}$ is an $(m-1)$-dimensional sphere with radius $\sqrt{|k|}$. For example, if $n=3$, the level set $\{f=0\}$ consists of two circular cones with vertices at origin. For any $x$ in $\{f = 0\}$, the whole line segment between $x$ and the origin lies in $\{f = 0\}$. We link every point to the origin via this segment exactly.

\noindent - When $\gamma =1$, $f(x) = -x_1^2 + \delta(x_2^2\cdots+x_m^2)+1$ is of hyperboloid-type, and the level set $\{f = 0\}=\{x\in \mathbb{R}^n |~x_1=\pm\sqrt{1+\delta(x_2^2\cdots+x_m^2)}\}$ is certainly disconnected. In this case, $f$ has form (\ref{form:1}) with
$k=1, \theta=1$. By Lemma \ref{lem:disconnected_f<0} $(i)$, $\{f \leq 0\}$ is
disconnected too.

\noindent - When $\gamma =-1$, we have $\{f = 0\}=\{x\in \mathbb{R}^n |~
-x_1^2+\delta(x_2^2\cdots+x_m^2)=1\}$.

If $\delta =1$ and $m\geq 3$, by the rotation argument again, any point $u=(u_1, \cdots u_n)^T$ in $\{f = 0\}$ can be connected to $\bar{u}=(u_1,\sqrt{u_2^2+\cdots+u_m^2},0,\cdots, 0)^T$, a point on the plane spanned by the first and second coordinates ($x_1$ and $x_2$). Similar to the case of ($\lambda_{-}\geq 2$ and $\lambda_{+}\geq 2$), in the view of this plane, any $\bar{u}$ lies on one branch of the hyperbola. Therefore, we can also link $\bar{u}$ to a fixed point $(1,0,\cdots,0)$ on the axis $x_1$ by tracing this branch of hyperbola. More precisely, we link $\bar{u}$ to the axis by the curve $\Gamma(t)=(t, \sqrt{1+t^2}, 0, \cdots,
0)^T$, where $t$ lies between $0$ and $u_1$.

If $\delta =1$ and $m=2$, we have $f(x)=-x_1^2+x_2^2-1$. The level set $\{f = 0\}=\{x\in \mathbb{R}^n | x_2=\pm\sqrt{1+x_1^2}\}$ is disconnected and $-f$ is of form (\ref{form:1}) with $k=1, \theta=1$. By Lemma \ref{lem:disconnected_f<0}(ii), $\{-f \leq 0\}=\{f \geq 0\}$ is disconnected, too.

If $\delta=0$, then $\{f = 0\}=\{x\in \mathbb{R}^n |~
-x_1^2-1=0\}=\emptyset$.

\vskip 0.15cm
\noindent In summary, for $\lambda_{-}\le1$, $\{f = 0\}$ is disconnected
when and only when:
\begin{itemize}
	\item[•] $\lambda_{-}=0,~\gamma =-1,~m=1,$ in which case
	$\{f \geq0\}$ is disconnected;
	\item[•] $\lambda_{-}=1,~\gamma=1,$
	in which case $\{f \leq 0\}$ is disconnected;
	\item[•] $\lambda_{-}=1,~\gamma=-1,~\delta=1,~m=2,$ in which case
	$\{f \geq 0\}$ is disconnected.
\end{itemize}

Finally, the cases $\lambda_+=0 $ (or $\lambda_+=1$) is
equivalent to the cases when the number $\lambda_-$ of  $-f$ is
$0$ (or $1$ respectively) and $\{-f = 0\}=\{f = 0\}$. Therefore, when
$\lambda_+\le1,$ $\{f = 0\}$ is disconnected if and only if
either $f$ or $-f$ has form (\ref{form:1}) with $k=1, \theta=1,$
namely, either $\{f \leq0\}$ or  $\{f \geq 0\}$ is
disconnected.
\end{proof}

Before we study the behaviors of two 0-level sets, we first give an useful alternative definition for the separation between two 0-level sets as follows.

\begin{proposition}\label{prop:criterion}
The 0-level set $\{g=0\}$ separates the 0-level set $\{f=0\}$ if and only if
\begin{align}
&\{f=0\}\cap\{g=0\}=\emptyset ~ \text{ and } \label{eq:alt_def-1} \\
&g(u)g(v)<0 ~ \text{ for some }~u, v \in\{f=0\}. \label{eq:alt_def-2}
\end{align}
\end{proposition}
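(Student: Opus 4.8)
The plan is to prove both implications by leveraging the two facts quoted earlier: the Intermediate Value Theorem packaged as Lemma \ref{lem:IVT}, and the dichotomy of Lemma \ref{lem:disconnected_f=0}, which guarantees that a disconnected $\{f=0\}$ splits into exactly two connected components. The necessity direction is essentially a repackaging of the properties already extracted from Definition \ref{def-separation--}, whereas the sufficiency direction is where the real work lies.

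For necessity, assume $\{g=0\}$ separates $\{f=0\}$, so by definition there are non-empty $L^-, L^+$ with $\{f=0\} = L^- \cup L^+$ and $g(a^-) g(a^+) < 0$ for all $a^- \in L^-$ and $a^+ \in L^+$. I would first observe that $g$ cannot vanish anywhere on $\{f=0\}$: if $g(a) = 0$ for some $a \in \{f=0\}$, then $a$ lies in $L^-$ or $L^+$, and pairing it against any point of the opposite (non-empty) set forces a product equal to $0$, contradicting the strict inequality \eqref{eq:def_sep-2}. This yields \eqref{eq:alt_def-1}, which is exactly the inclusion \eqref{S-lemma-type} recorded earlier. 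Condition \eqref{eq:alt_def-2} is then immediate: choosing any $u \in L^-$ and $v \in L^+$ (both non-empty) gives $g(u) g(v) < 0$.

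For sufficiency, assume \eqref{eq:alt_def-1} and \eqref{eq:alt_def-2}. The first step is to show that $\{f=0\}$ is disconnected. If it were connected, then since $g$ is continuous and $\{f=0\} \cap \{g=0\} = \emptyset$ by \eqref{eq:alt_def-1}, Lemma \ref{lem:IVT} (applied with $h = g$, $\alpha_0 = 0$, and $C = \{f=0\}$) would force $g(x) g(y) > 0$ for all $x, y \in \{f=0\}$, directly contradicting \eqref{eq:alt_def-2}. Hence $\{f=0\}$ is disconnected, and by Lemma \ref{lem:disconnected_f=0} it has exactly two connected components, say $C_1$ and $C_2$. Applying Lemma \ref{lem:IVT} separately to each $C_i$ (each again being disjoint from $\{g=0\}$) shows that $g$ keeps a constant sign on each component. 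The two points $u, v$ of \eqref{eq:alt_def-2} must lie in different components, for otherwise the constant-sign conclusion would give $g(u) g(v) > 0$; consequently $g$ is negative on one component and positive on the other. Labelling $L^-$ as the component where $g < 0$ and $L^+$ as the one where $g > 0$, both are non-empty, their union is $\{f=0\}$, and $g(a^-) < 0 < g(a^+)$ gives $g(a^-) g(a^+) < 0$ for every $a^- \in L^-$ and $a^+ \in L^+$, which is precisely Definition \ref{def-separation--} with $\star$ being ``$=$''.

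The main obstacle is the sufficiency direction, and within it the decisive input is Lemma \ref{lem:disconnected_f=0}: upgrading a single scalar inequality $g(u) g(v) < 0$ at two sample points to the global separation structure relies on knowing that $\{f=0\}$ cannot break into three or more pieces. Once the exact ``two components'' count is secured, the sign analysis via Lemma \ref{lem:IVT} is routine, so the entire weight of the argument rests on that topological classification rather than on any fresh computation here.
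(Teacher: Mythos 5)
Your proof is correct, but it takes a genuinely different and considerably heavier route than the paper's. The paper's sufficiency argument is a two-line set manipulation: define $L^+ = \{f=0\}\cap\{g>0\}$ and $L^- = \{f=0\}\cap\{g<0\}$; condition \eqref{eq:alt_def-1} gives $\{f=0\}=L^+\cup L^-$, condition \eqref{eq:alt_def-2} makes both parts non-empty, and the sign condition $g(a^-)g(a^+)<0$ holds for every pair by construction. The key point you seem to have missed is that Definition \ref{def-separation--} does \emph{not} require $L^-$ and $L^+$ to be connected --- any partition into two non-empty pieces with the sign property suffices --- so no topological input is needed. Your version instead routes through Lemma \ref{lem:IVT} and, crucially, Lemma \ref{lem:disconnected_f=0} to establish that $\{f=0\}$ has exactly two connected components, then identifies $L^{\pm}$ with those components. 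Every step of that argument is sound (and since Lemma \ref{lem:disconnected_f=0} precedes this proposition in the paper, there is no circularity), and as a by-product it shows that the $L^{\pm}$ can always be taken to be the connected components themselves, which is a mildly stronger conclusion. But the ``decisive input'' you identify --- the exact two-component count --- is in fact unnecessary for the statement as posed, and leaning on it makes the proof longer and dependent on a nontrivial classification result where elementary set bookkeeping suffices.
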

\begin{proof} The necessity is obvious, so it suffices to prove the sufficiency. Let $L^+ = \{f=0\} \cap \{g>0\}$ and $L^- = \{f=0\} \cap \{g<0\}$. Since $\{f=0\}\cap\{g=0\}=\emptyset$, we have $\{f=0\} = L^+ \cup L^-$. Both $L^+$ and $L^-$ are non-empty due to the assumption: there exist $u, v \in\{f=0\}$ such that $g(u)g(v)<0$. Finally, the definition of $L^+$ and $L^-$ directly implies that $g(u)g(v)<0$ for all $u \in L^+$ and $v \in L^-$.
\end{proof}

As we said in Section \ref{sec:Intro}, it is possible that a quadratic hypersurface separates another quadratic hypersurface. Nonetheless, Theorem \ref{thm:separation_reduce_linear} shows that this can be reduced to the case that a hyperplane separates a quadratic hypersurface.

\begin{theorem} \label{thm:separation_reduce_linear}
The set $\{g=0\}$ separates the set $\{f=0\}$  if and only if $B=\lambda A$ for some $\lambda\in\mathbb{R}$ and $\{-\lambda f+g=0\}$ separates $\{f=0\}$.
\end{theorem}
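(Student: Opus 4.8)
\emph{Plan.} Write $q_A(x)=x^TAx$ and $q_B(x)=x^TBx$ for the quadratic parts of $f$ and $g$. Apart from one substantive fact, the statement is bookkeeping with Proposition~\ref{prop:linear_comb_preserve_separation}, so I would isolate that fact as a

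\emph{Key Lemma.} If $\{g=0\}$ separates $\{f=0\}$, then $B=\lambda A$ for some $\lambda\in\mathbb{R}$.

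Granting the Key Lemma, both directions follow at once. For sufficiency, suppose $B=\lambda A$ and $h:=-\lambda f+g$ separates $\{f=0\}$; since $g=\lambda f+h$, Proposition~\ref{prop:linear_comb_preserve_separation} applied to the separating pair $(f,h)$ with $\alpha=\lambda,\ \beta=1,\ \gamma=1$ gives that $\{g=0\}=\{\lambda f+h=0\}$ separates $\{f=0\}$. For necessity, suppose $\{g=0\}$ separates $\{f=0\}$; the Key Lemma supplies $\lambda$ with $B=\lambda A$, and Proposition~\ref{prop:linear_comb_preserve_separation} applied to $(f,g)$ with $\alpha=-\lambda,\ \beta=1,\ \gamma=1$ shows $\{-\lambda f+g=0\}$ separates $\{f=0\}$. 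Thus the entire content is the Key Lemma, on which I would spend the proof.

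To prove the Key Lemma I would first normalise: an invertible affine change of variables acts on $A,B$ by the congruence $A\mapsto P^TAP,\ B\mapsto P^TBP$, preserves the relation $B=\lambda A$, and is a homeomorphism preserving separation, so I may assume $f$ is in one of the canonical forms \eqref{form:1}--\eqref{form:5}. Since $\{g=0\}$ separates $\{f=0\}$, Proposition~\ref{prop:criterion} and the remarks of Section~\ref{sec:Intro} tell us that $\{f=0\}$ is disconnected with exactly two components $L^+=\{f=0\}\cap\{g>0\}$ and $L^-=\{f=0\}\cap\{g<0\}$, and that $\{f=0\}\cap\{g=0\}=\emptyset$. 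By the enumeration inside the proof of Lemma~\ref{lem:disconnected_f=0}, after possibly replacing $f$ by $-f$ (which only flips the sign of $\lambda$), either $q_A$ is \emph{indefinite} (the hyperboloid/hyperbolic-cylinder cases $-x_1^2+x_2^2+\cdots+x_m^2+1$ or $-x_1^2+x_2^2-1$), or $q_A=x_1^2$ is \emph{rank-one semidefinite} (the two-parallel-hyperplane case $x_1^2-1$).

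The heart of the argument is an asymptotic computation. If $d$ is a unit asymptotic direction of $L^+$, i.e.\ $d=\lim_k p_k/|p_k|$ for some $p_k\in L^+$ with $|p_k|\to\infty$, then dividing $g(p_k)>0$ by $|p_k|^2$ and letting $k\to\infty$ yields $q_B(d)\ge 0$; symmetrically $q_B(d')\le 0$ for every asymptotic direction $d'$ of $L^-$. A direct parametrisation of each sheet shows that the asymptotic directions of $L^+$ fill one half of the cone $\{q_A=0\}$ and those of $L^-$ fill the other half, the two halves covering $\{q_A=0\}$ and being interchanged by $d\mapsto-d$. Since $q_B$ is even, combining the two inequalities forces $q_B\equiv 0$ on the whole cone $\{q_A=0\}$.

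It then remains to pass from ``$q_B$ vanishes on $\{q_A=0\}$'' to ``$B=\lambda A$''. When $q_A$ is indefinite this is the classical fact that a quadratic form vanishing on the zero cone of an indefinite quadratic form is a scalar multiple of it (proved by evaluating $q_B$ on finitely many cone vectors such as $e_1\pm e_j$ and $\sqrt2\,e_1+e_i+e_j$ and solving the resulting linear system for the entries of $B$), giving $B=\lambda A$ directly. The rank-one case $q_A=x_1^2$ is the delicate one, and is where I expect the main obstacle: here $\{q_A=0\}=\{x_1=0\}$ is merely a hyperplane, so the cone identity only forces the lower $(n-1)\times(n-1)$ block of $B$ to vanish, leaving $q_B(x)=B_{11}x_1^2+2x_1\sum_{j\ge2}B_{1j}x_j$. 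To kill the cross terms I would use sign-definiteness more carefully: restricting $g$ to $\{x_1=1\}$ (resp.\ $\{x_1=-1\}$) gives an affine function of $(x_2,\dots,x_n)$ that is everywhere positive (resp.\ everywhere negative), hence has vanishing gradient; comparing the two gradients forces $B_{1j}=b_j=0$ for all $j\ge 2$, so that $B=B_{11}e_1e_1^T=\lambda A$. Assembling the two cases proves the Key Lemma and hence the theorem.
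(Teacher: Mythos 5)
Your proposal is correct, and it reorganizes the argument in a way that differs from the paper's execution even though the underlying mechanism is the same. The paper proves necessity by fixing the canonical form $-x_1^2+\delta(x_2^2+\cdots+x_m^2)+1$, substituting four explicit families of points of $\{f=0\}^{\pm}$ (such as $\pm e_1+te_i$ and $te_1\pm\sqrt{t^2-1}e_2$), and reading off the leading coefficients of the resulting sign-definite polynomials in $t$ to force $b_{ii}=-b_{11}$, $b_{ij}=0$, etc.; it then verifies the two separation conditions for $\{-\lambda f+g=0\}$ by hand, and proves sufficiency by the same algebraic identity $g(u)g(v)=(-\lambda f+g)(u)(-\lambda f+g)(v)$ on $\{f=0\}$. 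Your version isolates $B=\lambda A$ as the single Key Lemma and correctly dispatches both directions through Proposition~\ref{prop:linear_comb_preserve_separation} (the paper essentially reproves that proposition inline), and it replaces the coefficient extraction by the cleaner statement that $q_B\ge 0$ on the asymptotic directions of $L^+$, $q_B\le 0$ on those of $L^-$, and evenness of $q_B$ forces $q_B\equiv 0$ on $\{q_A=0\}$ --- which is exactly what the paper's ``leading coefficient as $t\to\infty$'' computations amount to, stated conceptually. Two points deserve explicit care if you write this out: (1) you must check that the asymptotic directions of each sheet fill a \emph{closed} half of the cone, including the equatorial directions with $d_1=0$ (reached by fixing $x_1=\pm1$ and letting the coordinates in the kernel of $A$, or the positive-eigenvalue coordinates when $\gamma=0$ is not the case here, go to infinity), since those directions are where the lower-right block of $B$ is killed; and (2) your treatment of the rank-one case via the vanishing gradient of the everywhere-positive (resp.\ everywhere-negative) affine restrictions of $g$ to $\{x_1=\pm1\}$ is exactly the needed supplement, and is in fact more explicit than the paper's remark that ``when $\delta=0$, by the same argument, \eqref{vnnv} also holds.'' The only information in the paper's proof that your route does not recover is the extra conclusion $b_i=0$ for $i>m$ recorded in \eqref{vnnv}, but that is not part of the theorem's statement.
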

\begin{proof}{[ Proof for necessity]}

Since $\{f=0\}$ must be disconnected, by Lemma \ref{lem:disconnected_f<0} and Lemma \ref{lem:disconnected_f=0}, there exists a basis of $\mathbb{R}^n$ such that $f(x)$ or $-f(x)$ has the canonical form:
\[-x_1^2+\delta(x_2^2+\cdots+x_m^2)+1\]
and \[\{f=0\}=\{f=0\}^+\cup \{f=0\}^-, \text{ where } \{f=0\}^{\pm}=\{x\in  \{f=0\}  | \pm x_1>0\}.\]

With above new basis, we write $g(x)=x^TBx+2b^Tx+b_0$. By Lemma \ref{lem:IVT}, $g(x)$ has the same sign for all $x$ belongs to one connected component in $\{f=0\}$. Without loss of generality, assume that
\begin{equation}\label{mm1}
g(x)<0~\forall x\in \{f=0\}^- \text{ and } g(x)>0~ \forall~x\in \{f=0\}^+.\end{equation}

\noindent{Now, we first consider the case $\delta=1$.}

- Step 1 :
It is easy to see that $\pm 1e_1+te_i\in\{f=0\}, ~\forall t\in \mathbb{R}, i >  m.$ Then \eqref{mm1} implies that

$
\begin{cases}
g(-1e_1+te_i)<0 & \forall t\\
g(+1e_1+te_i)>0 & \forall t\\
\end{cases}
\Rightarrow \begin{cases}
b_{11}+b_{ii}t^2-2b_{1i}t -2b_1+2b_it+b_0<0& \forall t\\
b_{11}+b_{ii}t^2+2b_{1i}t +2b_1+2b_it+b_0>0 & \forall t\\
\end{cases}$
In the view point of polynomials in $t$, the above two inequalities show that one of the polynomials of degree $2$ is always positive and the other is always negative. One can directly derive that
\begin{equation}
b_{ii}=b_{1i}=b_{i1}=b_i=0, ~\forall i>m. \label{mm18}
\end{equation}

- Step 2 :
We have $\sqrt{2}e_1 + e_2+te_i\in\{f=0\}^+, ~\forall t\in \mathbb{R}, i > m$. Then \eqref{mm1} implies that $g(\sqrt{2}e_1 + e_2+te_i)>0 ~, \forall t \in \mathbb{R}$. Combined with \eqref{mm18}, one has
$$2b_{11}+b_{22}+2\sqrt{2}b_{12}+2b_{2i}t+\sqrt{2}b_1+2b_2+b_0>0 ~~~ \forall t$$
It implies that $b_{2i}=0, ~\forall i>m$. By the same argument, one can derive $b_{3i}=\cdots = b_{ni}=0, \forall i>m$. Since the matrix $B$ is symmetric, we therefore have
\begin{equation}
b_{2i}=b_{i2}=b_{3i}=b_{i3}=\cdots = b_{ni}=b_{in}=0, \forall i>m.\label{mm19}
\end{equation}

- Step 3 :
We have
\begin{eqnarray*}
te_1\pm \sqrt{t^2-1}e_2\in\{f=0\}^+ ~\forall t>1,\\
-te_1\pm \sqrt{t^2-1}e_2\in\{f=0\}^- ~\forall t>1.
\end{eqnarray*}
Again, \eqref{mm1} implies that  	
\begin{eqnarray}
b_{11}t^2+b_{22}(t^2-1)+2b_{12}t\sqrt{t^2-1}+2b_1t+2b_2\sqrt{t^2-1}+b_0>0 ~~\forall t>1 ~~\label{m18} \\
b_{11}t^2+b_{22}(t^2-1)-2b_{12}t\sqrt{t^2-1}+2b_1t-2b_2\sqrt{t^2-1}+b_0>0 ~~\forall t>1 ~~\label{m19}\\
b_{11}t^2+b_{22}(t^2-1)-2b_{12}t\sqrt{t^2-1}-2b_1t+2b_2\sqrt{t^2-1}+b_0<0 ~~\forall t>1 ~~\label{m20}\\
b_{11}t^2+b_{22}(t^2-1)+2b_{12}t\sqrt{t^2-1}-2b_1t-2b_2\sqrt{t^2-1}+b_0<0 ~~\forall t>1 ~~\label{m21}
\end{eqnarray}	
Consider the following four combinations:
\begin{align*}
\eqref{m18}+\eqref{m19}&:~2(b_{11}+b_{22})t^2+4b_1t+2(b_0-b_{22})>0~,~ \forall t>1\\
\eqref{m20}+\eqref{m21}&:~2(b_{11}+b_{22})t^2-4b_1t+2(b_0-b_{22})<0~,~ \forall t>1\\
\eqref{m18}-\eqref{m20}&:~4b_{12}t \sqrt{t^2-1}+4b_1t>0~,~\forall t>1\\
\eqref{m19}-\eqref{m21}&:~4b_{12}t \sqrt{t^2-1}-4b_1t<0~,~\forall t>1
\end{align*}
Observing the leading coefficients of these four polynomials in $t$, we have
\begin{align}
b_{11}=-b_{22},\label{mm24}\\
b_{12}=b_{21}=0.\label{mm25}
\end{align}
By the same argument, we further have
\begin{align}
b_{11}=-b_{ii}, i=3, \cdots, m, \label{mm26}\\
b_{1i}=b_{i1}=0, i=3, \cdots, m. \label{mm27}
\end{align}

- Step 4 :
We have \begin{eqnarray*}
	te_1+\dfrac{\sqrt{2}}{2}\sqrt{t^2-1}e_2\pm \dfrac{\sqrt{2}}{2}\sqrt{t^2-1}e_3\in\{f=0\}^+~\forall t>1,\\
	te_1-\dfrac{\sqrt{2}}{2}\sqrt{t^2-1}e_2\pm \dfrac{\sqrt{2}}{2}\sqrt{t^2-1}e_3\in\{f=0\}^+~ \forall t>1.
\end{eqnarray*}
Combined with the results in \eqref{mm24},  \eqref{mm25}, \eqref{mm26} and \eqref{mm27}, statement \eqref{mm1} implies that
\begin{align}
b_{11}t^2-{b_{11}}(t^2-1)+b_{23}(t^2-1)+2b_1t+(+\sqrt{2}b_2+\sqrt{2}b_3)\sqrt{t^2-1}+b_0>0~\forall t>1,\label{m26}\\
b_{11}t^2 -{b_{11}}(t^2-1)-b_{23}(t^2-1)+2b_1t+(+\sqrt{2}b_2-\sqrt{2}b_3)\sqrt{t^2-1}+b_0>0~\forall t>1,\label{m27}\\
b_{11}t^2 -{b_{11}}(t^2-1)-b_{23}(t^2-1)+2b_1t+(-\sqrt{2}b_2+\sqrt{2}b_3)\sqrt{t^2-1}+b_0>0~\forall t>1,\label{m28}\\
b_{11}t^2 -{b_{11}}(t^2-1)+b_{23}(t^2-1)+2b_1t+(-\sqrt{2}b_2-\sqrt{2}b_3)\sqrt{t^2-1}+b_0>0~\forall t>1.\label{m29}
\end{align}
As in Step 3, via observing two inequalities: \eqref{m26}+\eqref{m29} and \eqref{m27}+\eqref{m28}, the result $b_{23}=b_{32}=0$ yields. Analogously, we have
\begin{equation}
b_{ij}=0 ~\forall i\ne j. \label{mm32}
\end{equation}

Combining the results in the above four steps and noticing that the matrix of $f(x)$ is $A=diag(-1,1,\cdots,1)$, we get following conclusion
\begin{eqnarray}\label{vnnv}
B=\lambda A \text{ for some } \lambda \in \mathbb{R}, \text{ and } b_i=0 ~ \forall i>m.
\end{eqnarray}
When $\delta=0$, by the same argument, \eqref{vnnv} also holds with $m=1$. Hence, $B$ must equal to $\lambda A$ for some $ \lambda \in \mathbb{R}$ with above new basis. Therefore, this conclusion still holds for initial basis.

Since $\{g=0\}\cap \{f=0\}$ is empty and $\{-\lambda f+g=0\}\cap \{f=0\} = \{g=0\}\cap \{f=0\}$, we get $\{-\lambda f+g=0\}\cap \{f=0\}$ is empty. Moreover, \eqref{mm1} shows that $(-\lambda f+g)(u)(-\lambda f+g)(v)=g(u)g(v)<0$ for all $u\in \{f=0\}^-, v\in \{f=0\}^+$. By Proposition \ref{prop:criterion}, $\{f=0\}$ is separated by  $\{-\lambda f+g=0\}$.

\noindent{[ Proof for sufficiency]} Suppose that $\{f=0\}$ is separated by  $\{-\lambda f+g=0\}$. This implies that $(-\lambda f+g)(u)(-\lambda f+g)(v)<0~ \forall u\in \{f=0\}^+, v\in \{f=0\}^- $. Since $g(u)g(v)=(-\lambda f+g)(u)(-\lambda f+g)(v)$, we can conclude $\{g=0\}$ separates $\{f=0\}$.
\end{proof}

\begin{remark}
The condition $B = \lambda A$ in Theorem \ref{thm:separation_reduce_linear} ensures that $- \lambda f + g$ becomes an affine function.
\end{remark}

The statement of the following lemma (Lemma \ref{lem:separation_LQ_condition}) is very similar to that of Theorem 1 in \cite{Quang-Sheu18}. In fact, the proof of this lemma is almost the same as the proof of Theorem 1 in \cite{Quang-Sheu18}, so in the following, we will refer to the proof in \cite{Quang-Sheu18} and just point out the differences between these two proofs.

\begin{lemma} \label{lem:separation_LQ_condition}
Suppose that $h(x)$ is a affine function, then $\{h=0\}$ separates $\{f=0\}$ if and only if  there exists a basis such that
	
\begin{description}
\item[$\rm(i)$] $f(x)$  has the form $-x_1^2+\delta(x_2^2+\cdots+x_m^2)+1,~ \delta \in\{0, 1\};$
\item[$\rm(ii)$] With the same basis,  $h(x)$ has the form $c_1x_1+\delta(c_2x_2+\cdots+c_m x_m)+c_0$, where $c_1 \ne 0;$
\item[$\rm(iii)$] $f(x)|_{\{h = 0\}}=-\left( \delta\sum_{i=2}^{m}\dfrac{c_i}{c_1}x_i+\dfrac{c_0}{c_1} \right)^2+\delta\sum_{i=2}^{m}x_i^2+1> 0,~\forall (x_2, \cdots,x_n)^T\in\mathbb{R}^{n-1}.$
\end{description}
or
\begin{description}
\item[$\rm(i)'$] $-f(x)$  has the form $-x_1^2+\delta(x_2^2+\cdots+x_m^2)+1,~ \delta \in\{0, 1\};$
\item[$\rm(ii)'$] With the same basis,  $h(x)$ has the form $c_1x_1+\delta(c_2x_2+\cdots+c_mx_m)+c_0$, where $c_1\ne 0;$
\item[$\rm(iii)'$] $-f(x)|_{\{h = 0\}}=-\left(\delta\sum_{i=2}^{m}\dfrac{c_i}{c_1}x_i+\dfrac{c_0}{c_1}\right)^2+\delta\sum_{i=2}^{m}x_i^2+1> 0,~\forall (x_2, \cdots,x_n)^T\in\mathbb{R}^{n-1}.$
\end{description}
Therefore if $h(x)$ is a affine function then $\{h=0\}$ separates $\{f=0\}$ if and only if either $\{h=0\}$ separates $\{f\leq 0\}$ or $\{h=0\}$ separates $\{-f\leq 0\}$.
\end{lemma}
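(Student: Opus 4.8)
My plan is to ground the whole proof in the criterion of Proposition~\ref{prop:criterion}: the affine set $\{h=0\}$ separates $\{f=0\}$ exactly when $\{h=0\}\cap\{f=0\}=\emptyset$ and $h$ changes sign on $\{f=0\}$. I would prove the main equivalence (separation $\Longleftrightarrow$ (i)--(iii) or (i)$'$--(iii)$'$) and then obtain the closing ``Therefore'' by matching the canonical conditions to a sublevel-set separation theorem. The computational engine is the observation that, once $f$ is in the form (i) and $h$ is in the form (ii), one has the identity $h(x)=c_1\bigl(x_1+\ell(x)\bigr)$ with $\ell(x)=\delta\sum_{i=2}^m\frac{c_i}{c_1}x_i+\frac{c_0}{c_1}$, so that on $\{f=0\}$, where $x_1=\pm s$ and $s:=\sqrt{1+\delta\sum_{i=2}^m x_i^2}\ge 1$, the two antipodal points $u=(s,x_2,\dots)$ and $v=(-s,x_2,\dots)$ satisfy $h(u)h(v)=c_1^2(\ell^2-s^2)$. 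Throughout I would reuse, exactly as in Theorem~\ref{thm:separation_reduce_linear}, the consequence of Lemma~\ref{lem:IVT} that an affine map is sign-constant on each connected component of $\{f=0\}$; the argument runs parallel to Theorem~1 of \cite{Quang-Sheu18}, and I would only highlight where separating the boundary $\{f=0\}$ differs from separating $\{f<0\}$.

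For necessity, assume $\{h=0\}$ separates $\{f=0\}$. Then $\{f=0\}$ is disconnected, so Lemma~\ref{lem:disconnected_f=0} together with Lemma~\ref{lem:disconnected_f<0} puts $f$ or $-f$ into the canonical form $-x_1^2+\delta(x_2^2+\cdots+x_m^2)+1$; choosing $f$ gives (i), and $-f$ gives the primed alternative. Writing $h(x)=\sum_{i=1}^n c_ix_i+c_0$, I would obtain (ii) by exploiting the coordinates absent from $f$: for every $i>m$ (which for $\delta=0$ means every $i\ge 2$) the line $t\mapsto(1,0,\dots,0)+te_i$, $t\in\mathbb{R}$, stays inside the component $\{f=0\}^+$, so by Lemma~\ref{lem:IVT} the affine function $t\mapsto c_it+\text{const}$ must be sign-constant on all of $\mathbb{R}$, forcing $c_i=0$. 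For $c_1\ne 0$: if $c_1=0$ then $h$ takes equal values at $(\pm s,x_2,\dots)$, so $h$ carries the same sign on both sheets, contradicting that $h$ changes sign across them; hence $c_1\ne 0$ and the identity above is legitimate. Finally, separation demands $h(u)h(v)<0$ for the antipodal pair, i.e.\ $c_1^2(\ell^2-s^2)<0$, whence $\ell^2<s^2$ for every $(x_2,\dots,x_m)$, which is precisely (iii).

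For sufficiency, suppose (i)--(iii) hold. Condition (iii) reads $s^2-\ell^2>0$ everywhere, so at each point $(\pm s,x_2,\dots)$ of $\{f=0\}$ we have $h=c_1(\pm s+\ell)\ne 0$, giving $\{h=0\}\cap\{f=0\}=\emptyset$; the same identity gives $h(u)h(v)=c_1^2(\ell^2-s^2)<0$ for the antipodal pair, so $h$ changes sign on $\{f=0\}$. Proposition~\ref{prop:criterion} then yields separation. The primed conditions are treated verbatim with $-f$ replacing $f$, since $\{-f=0\}=\{f=0\}$.

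It remains to read off the closing equivalence, which I expect to be a short translation rather than new work. By Lemma~\ref{lem:disconnected_f<0}(ii), (i) says exactly that $\{f\le 0\}$ has two connected components, while (iii) forces $\{h=0\}\subset\{f>0\}$, i.e.\ $\{h=0\}\cap\{f\le 0\}=\emptyset$; together with (ii) these are precisely the hypotheses under which Theorem~1 of \cite{Quang-Sheu18} guarantees that the affine $\{h=0\}$ separates $\{f\le 0\}$. Thus (i)--(iii) is equivalent to ``$\{h=0\}$ separates $\{f\le 0\}$'' and, applying the same to $-f$, (i)$'$--(iii)$'$ is equivalent to ``$\{h=0\}$ separates $\{-f\le 0\}$''; combined with the main equivalence this gives the stated dichotomy. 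I anticipate the real effort to sit in the necessity direction---pinning down (ii) (the coordinate-killing and the argument that $c_1\ne 0$) and keeping the $\delta=0$ and $\delta=1$ cases straight---together with verifying that the strict inclusion in (iii) lines up with the $\{f\le 0\}$ rather than the $\{f<0\}$ separation theorem.
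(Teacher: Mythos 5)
Your proposal is correct and follows essentially the same route as the paper: reduce $f$ (or $-f$) to the canonical form via Lemma \ref{lem:disconnected_f=0}, kill the coefficients of $h$ on the degenerate coordinates and show $c_1\ne 0$, derive (iii) from the sign product on the antipodal pair $(\pm s, x_2,\dots)$, and invoke Proposition \ref{prop:criterion} for sufficiency. The only difference is that where the paper defers the coefficient-killing steps to the proof of Theorem 1 in \cite{Quang-Sheu18} (with substituted test points $s^{\pm}$, $w^{\pm}$), you supply self-contained line and antipodal-value arguments, and you also spell out the closing ``Therefore'' equivalence, which the paper leaves implicit.
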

\begin{proof}
[The necessary part]: Since $\{f=0\}$ is disconnected, by Lemma \ref{lem:disconnected_f=0}, $f(x)$ or $-f(x)$ has the form:  \begin{equation}\label{fff} -x_1^2+\delta(x_2^2+\cdots
+x_m^2)+1,~ \delta \in\{0, 1\}.\end{equation}

We first consider the case $f(x)= -x_1^2+\delta(x_2^2+\cdots+x_m^2)+1,~ \delta \in\{0, 1\}$. Since $h(x)$ is affine function, we may assume $h(x)=c_1x_1+\cdots+c_nx_n+c_0$.

- The linear term of $h$ corresponding to the only negative eigenvalue of $f$ is non-zero. That is $c_1 \neq 0$. In the proof of Theorem $1$ in \cite{Quang-Sheu18}, we have the same conclusion, so we refer to the proof of this part in \cite{Quang-Sheu18}. One can just replace the set $\{f<0\}$ in \cite{Quang-Sheu18} by $\{f=0\}$ and points $s^{\pm}$ in \cite{Quang-Sheu18} by
\begin{align*}
s^{\pm}&=(\pm 1, 0, \cdots, 0, \dfrac{-c_0}{c_i}, 0, \cdots, 0)\in\{f=0\}^{\pm}~~\text{for } i>m ~ ;\\
s^{\pm}&=(\pm \sqrt{1+\delta\dfrac{c_0^2}{c_i^2}}, 0, \cdots, 0, \dfrac{-c_0}{c_i}, 0, \cdots, 0)\in\{f=0\}^{\pm}~~\text{for } i\leq m
\end{align*}
The desired result is derived.


-  The linear terms of $h$ corresponding to 0-eigenvalues of $A$ must be $0$. Namely, $c_j=0, \forall j>m$ and in the case when $\delta=0$ in \eqref{fff}, $c_j=0, \forall j\geq 2$. Again, we refer to the proof of this part in \cite{Quang-Sheu18}. By substituting $\{f=0\}$ for $\{f<0\}$ in the original proof and $w^{\pm}=\pm(1, 0, \cdots, 0, \dfrac{-c_1}{c_i}, 0, \cdots, 0)$ for the original $w^{\pm}$, we get the desired result.


- For the validity of $\rm{(iii)}$: since $\{h=0\}$ separates $\{f=0\}$, we have $\{f=0\}\cap \{h=0\}=\emptyset$, which implies either $\{h=0\} \subset \{f<0\}$ or $\{h=0\} \subset \{f>0\}$. Namely, for $x \in \{h=0\}$ that $x_1=-(\delta\dfrac{c_2}{c_1}x_2+\cdots+\delta\dfrac{c_m}{c_1}x_m+\dfrac{c_0}{c_1})$, there is either
\begin{equation}\label{fff1}
-\left(\delta\sum_{i=2}^{m}\dfrac{c_i}{c_1}x_i+\dfrac{c_0}{c_1}\right)^2+\delta\sum_{i=2}^{m}x_i^2+1> 0,~\forall (x_2, \cdots,
x_n)^T\in\mathbb{R}^{n-1},
\end{equation}
or
\begin{equation}\label{fff2}
-\left(\delta\sum_{i=2}^{m}\dfrac{c_i}{c_1}x_i+\dfrac{c_0}{c_1}\right)^2+\delta\sum_{i=2}^{m}x_i^2+1<0,~\forall (x_2, \cdots,
x_n)^T\in\mathbb{R}^{n-1}.
\end{equation}

To determine which case it is, with $x_2=\cdots=x_n=0$, we only need to determine the sign of $1-\frac{c_0^2}{c_1^2}$. Since $u^{\pm} = (\pm 1,0,\cdots,0) \in \{f=0\}^{\pm}$ and $\{g=0\}$ separates $\{f=0\}$, we have $h(u^+)h(u^-)=-c_1^2(1-\frac{c_0^2}{c_1^2})<0$, and hence $1-\frac{c_0^2}{c_1^2}>1$. Namely, \eqref{fff1} occurs. This prove (iii).

When $-f(x)= -x_1^2+\delta(x_2^2+\cdots+x_m^2)+1,~ \delta \in\{0, 1\}$, $\rm{(i)}'$, $\rm{(ii)}'$, and $\rm{(iii)}'$ can be derived by an analogous argument.

\vskip 0.3cm
\noindent [The sufficient part]: Without loss of generality, we can only prove that $\rm{(i)}$, $\rm{(ii)}$, and $\rm{(iii)}$ imply that $\{h=0\}$ separates $\{f=0\}$. First, $\rm{(i)}$ and $\rm{(ii)}$ imply that both $\{f=0\}$ and $\{h=0\}$ are non-empty. Second, $\rm{(iii)}$ gives us $\{h=0\} \subset \{f>0\}$, and hence $\{f=0\}\cap \{h=0\}=\emptyset$. Third, with $x_2=\cdots=x_n=0$ in $\rm{(iii)}$, we have $1-\frac{c_0^2}{c_1^2}>0$ and therefore $h(u^+)h(u^-)=-c_1^2(1-\frac{c_0^2}{c_1^2})<0$, where $u^{\pm} = (\pm 1,0,\cdots,0) \in \{f=0\}$. By Proposition \ref{prop:criterion}, we have $\{h=0\}$ separates $\{f=0\}$.
\end{proof}

\begin{corollary} \label{cor:chckable}
Suppose that $h(x) = c^T x + c_0$ is a affine function, then $\{h=0\}$ separates $\{f=0\}$ if and only if the following three hold:
\begin{enumerate}[(a)]
\setlength{\itemsep}{2pt}
\item $A$ has exactly one negative eigenvalue, $a \in \mathcal{R}(A)$
\item $c \in \mathcal{R}(A)$, $c \neq 0$
\item $V^T A V \succeq 0$, $w \in \mathcal{R}(V^T A V)$, and $f(x_0)-w^T (V^T A V^T)^{\dagger} w > 0$,
\end{enumerate}
or
\begin{enumerate}
\setlength{\itemsep}{2pt}
\item[(a$'$)] $A$ has exactly one positive eigenvalue, $a \in \mathcal{R}(A)$
\item[(b$'$)] $c \in \mathcal{R}(A)$, $c \neq 0$
\item[(c$'$)] $V^T A V \preceq 0$, $w \in \mathcal{R}(V^T A V)$, and $f(x_0)-w^T (V^T A V^T)^{\dagger} w < 0$,
\end{enumerate}
where $w = V^T(Ax_0+a)$, $x_0 = \frac{-c_0}{c^T c} c$, and $V \in \mathbb{R}^{n \times (n-1)}$ is the matrix basis for $\mathcal{N}(c^T)$.
\end{corollary}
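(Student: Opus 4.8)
The plan is to show that conditions (a)--(c) [resp.\ (a$'$)--(c$'$)] are nothing but the coordinate-free reformulation of conditions $\rm(i)$--$\rm(iii)$ [resp.\ $\rm(i)'$--$\rm(iii)'$] of Lemma~\ref{lem:separation_LQ_condition}; since that lemma already equates the latter with ``$\{h=0\}$ separates $\{f=0\}$'', the corollary will follow. The computational device underlying the translation is the parametrization of the hyperplane $\{h=0\}$ as $x=x_0+Vy$, $y\in\mathbb{R}^{n-1}$, where $x_0=\frac{-c_0}{c^Tc}c$ satisfies $h(x_0)=0$ and the columns of $V$ form a basis of $\mathcal N(c^T)$. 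Substituting into $f$ and expanding yields the restricted quadratic
\begin{equation*}
f(x_0+Vy)=y^T(V^TAV)\,y+2w^Ty+f(x_0),\qquad w=V^T(Ax_0+a),
\end{equation*}
on which the whole argument is based.

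First I would record two invariance facts that let us pass freely to the canonical basis of Lemma~\ref{lem:separation_LQ_condition}. Because the reduction to canonical form is an affine map $x=Py+q$ with $P$ invertible, Sylvester's law of inertia shows that ``$A$ has exactly one negative eigenvalue'' is preserved, and a short computation shows that $a\in\mathcal R(A)$ and $c\in\mathcal R(A)$ are preserved as well (the range transforms as $\mathcal R(P^TAP)=P^T\mathcal R(A)$, while $a\mapsto P^T(Aq+a)$ and $c\mapsto P^Tc$). The property ``$f>0$ on $\{h=0\}$'' is manifestly affine-invariant. Hence all of (a),(b) and the positivity in (c) may be checked in the basis where $f$ takes the form $-x_1^2+\delta(x_2^2+\cdots+x_m^2)+\gamma$ and $h$ is affine.

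Next I would dispatch condition (c) using the standard minimization fact for quadratics: $y^TQy+2w^Ty+\alpha>0$ for all $y$ if and only if $Q\succeq0$, $w\in\mathcal R(Q)$ and $\alpha-w^TQ^\dagger w>0$, in which case the minimum is attained at $y^\ast=-Q^\dagger w$ with value $\alpha-w^TQ^\dagger w$. With $Q=V^TAV$ and $\alpha=f(x_0)$ this says precisely that (c) $\Leftrightarrow$ ``$f>0$ on $\{h=0\}$'', which is exactly condition $\rm(iii)$; the symmetric statement for $-f$ gives (c$'$) $\Leftrightarrow\rm(iii)'$. It then remains to match the structural conditions in the canonical basis. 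Condition (a) is exactly what allows $f$ to be reduced, by diagonalization and completion of squares, to $-x_1^2+\delta(x_2^2+\cdots+x_m^2)+\gamma$, and this is $\rm(i)$ except for the sign of $\gamma$. Condition (b), namely $c\in\mathcal R(A)$ with $c\ne0$, forces the coefficients of $h$ along the zero-eigenvalue directions to vanish, i.e.\ $c_j=0$ for $j>m$, which is half of $\rm(ii)$. The remaining requirement $c_1\ne0$ of $\rm(ii)$ and the sign $\gamma>0$ of $\rm(i)$ are \emph{not} imposed separately but are forced by (c): if $c_1=0$ then $e_1$ is a direction lying in $\{h=0\}$ along which the quadratic part equals $e_1^TAe_1=-1<0$, contradicting $V^TAV\succeq0$, so $c_1\ne0$; and evaluating $f$ at the hyperplane point with $x_2=\cdots=x_n=0$ (where $x_1=-c_0/c_1$) gives $-c_0^2/c_1^2+\gamma\ge\min_{\{h=0\}}f>0$, whence $\gamma>0$. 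Thus (a)$\wedge$(b)$\wedge$(c) $\Leftrightarrow\rm(i)\wedge\rm(ii)\wedge\rm(iii)$, and the same reasoning applied to $-f$ gives the primed equivalence.

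The step I expect to be the main obstacle is verifying that condition (c) does this \emph{triple duty} correctly: beyond encoding condition $\rm(iii)$ through the pseudoinverse minimum, the single hypothesis $V^TAV\succeq0$ must be shown to recover the missing structural datum $c_1\ne0$ of $\rm(ii)$, while the strict positivity of the minimum must be shown to force $\gamma>0$ of $\rm(i)$. Keeping the two alternatives correctly separated---deciding, from the sign of $\gamma$ and the inertia of $A$, which branch applies when $A$ has inertia $(1,1,z)$ and both structural patterns are simultaneously available---is the accompanying bookkeeping that must be handled with care.
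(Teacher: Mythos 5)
Your proposal is correct and follows essentially the same route as the paper: parametrize $\{h=0\}$ as $x_0+Vy$, expand $f$ into $y^T(V^TAV)y+2w^Ty+f(x_0)$, and apply the standard pseudoinverse minimization criterion for strict positivity of a quadratic to identify (c) with condition $\rm(iii)$ of Lemma~\ref{lem:separation_LQ_condition}, matching (a),(b) with $\rm(i)$,$\rm(ii)$. If anything you are more careful than the paper, which dismisses the (a),(b) versus $\rm(i)$,$\rm(ii)$ correspondence as ``obvious'': your observation that (a)$\wedge$(b) alone is strictly weaker than $\rm(i)\wedge\rm(ii)$, and that the missing data $c_1\neq0$ and the positivity of the constant term are recovered from (c) via $V^TAV\succeq0$ and the strict positivity of the minimum, fills a real gap in the paper's one-line justification.
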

\begin{proof}
It suffices to prove that (i)-(iii) in Lemma \ref{lem:separation_LQ_condition} is equivalent to (a)-(c). Obviously, (i) and (ii) are equivalent to (a) and (b). Finally, we observe that
\begin{equation*} 
\left\{  h(x) = 0  \right\} = \left\{ x_{0} + V u \left|~ u \in \mathbb{R}^{n-1} \right. \right\},
\end{equation*}
where $V \in \mathbb{R}^{n \times (n-1)}$ is a matrix basis of $\mathcal{N}(c^T)$, and hence,
\begin{equation*} 
\begin{split}
F(u) := f|_{\{h=0\}}
&= (x_{0}+Vu)^T A (x_{0}+Vu)+2a^T(x_{0}+Vu)+a_0  \\
&= u^TV^TAVu + 2(x_{0}^TA+a^T)Vu + f(x_0) .
\end{split}
\end{equation*}

When (iii) holds, we have $F(u) > 0$ on $\mathbb{R}^{n-1}$, and hence $V^T A V \preceq 0$ and $w \in \mathcal{R}(V^T A V)$. From
\begin{equation} \label{eq:min_F}
\displaystyle \min_{u \in \mathbb{R}^{n-1}} F(u) = f(x_0) - w^T (V^T A V)^{\dagger} w > 0,
\end{equation}
we know (c) holds true. Conversely, when (c) holds, (\ref{eq:min_F}) holds true, and hence (iii) holds.
\end{proof}

With Lemma \ref{lem:separation_LQ_condition}, Theorem \ref{thm:mutual-sep} establish the mutual separation property for two quadratic hypersurfaces.

\begin{theorem}\label{thm:mutual-sep}
Suppose that both $f$ and $g$ are quadratic functions. If $\{g=0\}$ separates $\{f=0\}$, then $\{f=0\}$	separates $\{g=0\}$ too.
\end{theorem}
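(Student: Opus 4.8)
The plan is to combine the linear reduction of Theorem~\ref{thm:separation_reduce_linear} with the affine characterization of Lemma~\ref{lem:separation_LQ_condition}, exploiting the symmetry hidden in the relation $B=\lambda A$. First I would invoke Theorem~\ref{thm:separation_reduce_linear}: since $\{g=0\}$ separates $\{f=0\}$, there is $\lambda\in\mathbb{R}$ with $B=\lambda A$ such that the \emph{affine} function $h:=-\lambda f+g$ has $\{h=0\}$ separating $\{f=0\}$. Because $g$ is a genuine quadratic its matrix $B\neq0$, and since $A\neq0$ (as $\{f=0\}$ is disconnected) this forces $\lambda\neq0$; hence $A=\tfrac1\lambda B$. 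Applying Theorem~\ref{thm:separation_reduce_linear} again with the roles of $f$ and $g$ interchanged, the assertion ``$\{f=0\}$ separates $\{g=0\}$'' is equivalent to ``$A=\mu B$ for some $\mu$ and $\{-\mu g+f=0\}$ separates $\{g=0\}$''. Taking $\mu=\tfrac1\lambda$ and using $g=\lambda f+h$ one computes $-\mu g+f=-\tfrac1\lambda h$, so $\{-\mu g+f=0\}=\{h=0\}$. Thus the whole theorem collapses to one claim: \emph{the affine hypersurface $\{h=0\}$ separates $\{g=0\}$.}

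To prove this reduced claim I would verify the hypotheses of Lemma~\ref{lem:separation_LQ_condition} for the pair $(g,h)$. Working in the basis that lemma provides for $(f,h)$ (after possibly replacing $f$ by $-f$, which leaves $\{f=0\}$ unchanged and merely flips the sign of $\lambda$), we have $f=-x_1^2+\delta\sum_{i=2}^m x_i^2+1$ and $h=c_1x_1+\delta\sum_{i=2}^m c_ix_i+c_0$ with $c_1\neq0$ and $f|_{\{h=0\}}>0$; in particular the coefficients of $h$ beyond the $m$-th coordinate vanish. Since $B=\lambda A$, the quadratic part of $g=\lambda f+h$ is $\lambda A$, so completing the square coordinatewise sends $g$ to $-\lambda y_1^2+\lambda\delta\sum_{i=2}^m y_i^2+K$ for an explicit constant $K$, with $h$ still of the form $c_1y_1+\delta\sum_{i=2}^m c_iy_i+c_0'$. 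The positivity requirement (iii) for $(g,h)$ is then immediate: on $\{h=0\}$ one has $g=\lambda f$, so $g|_{\{h=0\}}=\lambda\,f|_{\{h=0\}}$ is single-signed, which is exactly what (iii)/(iii$'$) demand; this also gives $\{h=0\}\cap\{g=0\}=\{h=0\}\cap\{f=0\}=\emptyset$.

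The only genuine obstacle is to show that $\{g=0\}$ is of the disconnected, hyperboloid-type canonical shape, equivalently that $K$ and $\lambda$ share the same sign, $K/\lambda>0$; only then can $\tfrac1K g$ be rescaled to $-z_1^2+\delta\sum_{i=2}^m z_i^2+1$ and fed into Lemma~\ref{lem:separation_LQ_condition}. I would extract this from condition~(iii) itself. Minimizing the restricted quadratic in (iii) shows it is equivalent to $c_0^2+\delta\sum_{i=2}^m c_i^2<c_1^2$, hence $c_1^2-\delta\sum_{i=2}^m c_i^2>c_0^2$; substituting this into the explicit formula for $K$ yields
\[
\frac{K}{\lambda}=1+\frac{c_0}{\lambda}+\frac{1}{4\lambda^2}\Bigl(c_1^2-\delta\sum_{i=2}^m c_i^2\Bigr)>\Bigl(1+\frac{c_0}{2\lambda}\Bigr)^2\ge 0 .
\]
With $K/\lambda>0$ in hand, conditions (i) and (ii) of Lemma~\ref{lem:separation_LQ_condition} hold for $(g,h)$ from the completed-square form, (iii) holds by the previous paragraph, and therefore $\{h=0\}$ separates $\{g=0\}$. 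Feeding this back through Theorem~\ref{thm:separation_reduce_linear} gives that $\{f=0\}$ separates $\{g=0\}$. The alternative ``$-f$'' branch is handled verbatim by symmetry, which completes the proof.
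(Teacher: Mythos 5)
Your proof is correct and takes a genuinely different route from the paper's. The paper, after invoking Theorem \ref{thm:separation_reduce_linear} and Corollary \ref{cor:chckable} to get $b\in\mathcal{R}(A)$, splits into ${\rm rank}(A)=1$ (a direct one-variable root count) and ${\rm rank}(A)\ge 2$, where it proves the existence of $u,v\in\{g=0\}$ with $f(u)f(v)<0$ by contradiction via the S-lemma with equality, and then finishes with a connectedness argument based on Lemma \ref{lem:IVT}. You instead exploit the symmetry of the affine reduction: applying Theorem \ref{thm:separation_reduce_linear} in both directions (noting $\lambda\neq 0$ because $B\neq 0$) collapses the whole statement to the single claim that the common affine function $h=-\lambda f+g$ has $\{h=0\}$ separating $\{g=0\}$, which you verify by completing the square in $g=\lambda f+h$ and feeding the result into the sufficiency half of Lemma \ref{lem:separation_LQ_condition}; the only substantive step is the sign condition $K/\lambda>0$, extracted from condition (iii) for the pair $(f,h)$. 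This avoids the external S-lemma entirely, treats the rank-one and higher-rank cases uniformly, and is more quantitative, since it exhibits the canonical form of $g$ explicitly. Two small remarks. First, both you and the paper implicitly assume $B\neq 0$, i.e.\ that $g$ is genuinely quadratic; this is necessary (an affine $g$ has connected $\{g=0\}$, which cannot be separated), and you at least make the assumption explicit. Second, your claim that (iii) is \emph{equivalent} to $c_0^2+\delta\sum_{i=2}^{m}c_i^2<c_1^2$ misses the boundary case $c_0=0$ with $\delta\sum_{i=2}^{m}c_i^2=c_1^2$ (e.g.\ $f=-x_1^2+x_2^2+1$, $h=x_1+x_2$), where (iii) holds but your inequality becomes an equality; this is harmless, because in that case $K/\lambda=1>0$ directly, so the estimate $K/\lambda\ge\bigl(1+c_0/(2\lambda)\bigr)^2$ still yields $K/\lambda>0$ and the argument goes through.
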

\begin{proof}
When $\{g=0\}$ separates $\{f=0\}$, $\{f=0\}$ must be disconnected, and therefore, $f(x)$ or $-f(x)$ has canonical form $-x_1^2+\delta(x_2^2+\cdots +x_m^2)+1, \delta\in\{0, 1\}$. Moreover, by Theorem \ref{thm:separation_reduce_linear}, we know  $-\lambda f(x)+g(x)$ is affine and $\{-\lambda f+g(x)=0\}$ separates $\{f=0\}$. Therefore, Corollary \ref{cor:chckable} implies that $b\in {\rm range}(A)$.

- Case1: ${\rm rank}(A)=1$. Namely, $f(x)=-x_1^2+1$. Since $B = \lambda A \neq 0$ for some $\lambda$ and $b \in {\rm range}(A)$, $g(x)$ has the form $b_{11}x_1^2+b_1x_1+b_0$ with $b_{11} \neq 0$. Observe that $\{f=0\}=\{-1, 1\}$. By the assumption: $\{g=0\}$ separates $\{f=0\}$, we have $g(1)g(-1)<0$. Therefore, equation $b_{11}x_1^2+b_1x_1+b_0=0$ has two distinct real solutions $x_1^-, x_1^+$. One belongs to $(-1, 1)$ and the other lies outside. It means that $f(x_1^-)f(x_1^+)<0$. Namely,  $\{f=0\}$	separates $\{g=0\}$.

- Case2: ${\rm rank}(A)\geq 2$. We are going to prove that: \begin{equation}\label{bhuy56}
\text{There exists } u, v\in \{g=0\} \text{ such that } f(u)f(v)<0.
\end{equation}

Suppose on the contrary that (\ref{bhuy56}) does not hold. This is equivalent to that $\{g=0\}\subset \{f>0\}$ or $\{g=0\}\subset \{f<0\}$. If $\{g=0\}\subset \{f<0\}$, then $\{g=0\}\subset \{-f>0\}$. By S-lemma with equality, $-f(x)+\beta g(x)\geq 0$ for all $x \in \mathbb{R}^n$ for some $\beta \in \mathbb{R}$. It is impossible because $B=\lambda A$ and $A$ takes both negative and positive eigenvalues. With the same argument, we get that $\{g=0\}\subset \{f>0\}$ is impossible. That is to say, (\ref{bhuy56}) holds.

Let $\{g=0\}^1$ be the connected component containing $u$ and $\{g=0\}^2$ be connected component containing $v$. It is obvious that $\{g=0\}^1$ and $\{g=0\}^2$ are not connected; otherwise, there is a curve in $\{g=0\}$ connecting $u$ and $v$, and hence by (\ref{bhuy56}) and Lemma \ref{lem:IVT}, we have $\{g=0\}\cap \{f=0\}\neq \emptyset$, which contradicts to our assumption. Therefore, $\{g=0\}$ has exactly two connected components, $\{g=0\}^1$ and $\{g=0\}^2$. Now, by Lemma \ref{lem:IVT}, the connectedness of $\{g=0\}^1$ and $\{g=0\}^2$, and the fact (\ref{bhuy56}), we get that one connected component of $\{g=0\}$ is a subset of $\{f<0\}$ and that the other one is a subset of $\{f>0\}$. This means that $\{f=0\}$ separates $\{g=0\}$.
\end{proof}

Mutual separation property for quadratic hypersurfaces and Proposition \ref{prop:linear_comb_preserve_separation} together imply the following corollary. It shows that separation between two level sets is preserved under certain linear combination.

\begin{corollary} \label{cor:two_sides_linear_comb}
Suppose that the level set $\{\gamma f  + \delta g = 0\}$ separates the level set $\{\alpha f + \beta g = 0\}$ for some real numbers $\alpha, \beta, \gamma, \delta$ with $\alpha \delta -\beta \gamma \neq 0$. Then
\begin{enumerate}[(a)]
\item if both $\{f=0\}$ and $\{g=0\}$ are quadratic families, mutually separation happens for $\{f=0\}$ and $\{g=0\}$;
\item if one of $f$ and $g$ is affine, then the other must be a quadratic function and the 0-level set of the affine function separates the 0-level set of the quadratic function.
\end{enumerate}
\end{corollary}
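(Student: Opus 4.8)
The plan is to reduce both parts to a single transfer principle for an affine separator, and then finish with results already in hand. Write $F=\alpha f+\beta g$ and $G=\gamma f+\delta g$, so the hypothesis reads ``$\{G=0\}$ separates $\{F=0\}$''. A separated set is always disconnected, so $\{F=0\}$ is disconnected; hence $F$ is not affine and its matrix $\alpha A+\beta B$ is nonzero. Applying Theorem \ref{thm:separation_reduce_linear} to $\{G=0\}$ separating $\{F=0\}$, the matrix of $G$ is a multiple of that of $F$, i.e. $\gamma A+\delta B=\lambda(\alpha A+\beta B)$ for some $\lambda$, and the affine function $h:=-\lambda F+G=(\gamma-\lambda\alpha)f+(\delta-\lambda\beta)g$ satisfies that $\{h=0\}$ separates $\{F=0\}$. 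Rearranging the matrix identity gives $(\gamma-\lambda\alpha)A=(\lambda\beta-\delta)B$.

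Next I would read off the structural consequence in each case. In case (a), where $A\neq0$ and $B\neq0$, linear independence of $A,B$ would force $\gamma=\lambda\alpha$ and $\delta=\lambda\beta$, whence $\alpha\delta-\beta\gamma=0$, contradicting the hypothesis; so $B=\kappa A$ with $\kappa\neq0$. Writing $h_0:=g-\kappa f$ (affine) and substituting $g=\kappa f+h_0$, the quadratic term of $h$ must vanish, collapsing $h$ to $h=(\delta-\lambda\beta)h_0$, where $\delta-\lambda\beta\neq0$ (otherwise $G=\lambda F$ and $\{G=0\}=\{F=0\}$ could not be a separator, as disjointness fails); thus $\{h=0\}=\{g-\kappa f=0\}$ and $F=cf+\beta h_0$ with $c=\alpha+\beta\kappa\neq0$. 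In case (b), where $g$ is affine ($B=0$), disconnectedness of $\{F=0\}$ forces $\alpha\neq0$ and $A\neq0$, so $f$ is genuinely quadratic; here $\gamma=\lambda\alpha$, and the same computation gives $h=(\delta-\lambda\beta)g$ with $\delta-\lambda\beta\neq0$, so $\{h=0\}=\{g=0\}$ and $F=\alpha f+\beta g=cf+\beta h_0$ with $h_0:=g$ and $c=\alpha\neq0$. Both cases have thereby been reduced to the same situation: an affine $h_0$ whose zero set separates $\{F=0\}$, together with $F=cf+\beta h_0$ for a constant $c\neq0$.

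The heart of the argument is then the following transfer principle, which I would isolate as a lemma: \emph{if $h_0$ is affine and $\{h_0=0\}$ separates $\{q=0\}$ for a quadratic $q$, then $\{h_0=0\}$ separates $\{q+sh_0=0\}$ for every $s\in\mathbb{R}$.} The natural proof runs through the checkable criterion of Corollary \ref{cor:chckable}. Passing from $q$ to $q+sh_0$ leaves the matrix $A$ of $q$ unchanged (since $h_0$ is affine), so the eigenvalue count in (a)/(a$'$) and the semidefiniteness of $V^TAV$ in (c)/(c$'$) are unaffected; it leaves the restriction of the quadratic to the hyperplane $\{h_0=0\}$ unchanged, because $h_0\equiv0$ there, so the full content of (c)/(c$'$) persists verbatim; and it preserves the range membership of the linear part because the gradient of $h_0$ already lies in $\mathcal{R}(A)$ by (b)/(b$'$). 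Applying this lemma with $q=F$ and $s=-\beta$ yields that $\{h_0=0\}$ separates $\{F-\beta h_0=0\}=\{cf=0\}=\{f=0\}$.

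It then remains only to assemble the pieces. In case (a), $\{-\kappa f+g=0\}=\{h_0=0\}$ separates $\{f=0\}$ and $B=\kappa A$, so Theorem \ref{thm:separation_reduce_linear} gives that $\{g=0\}$ separates $\{f=0\}$; as $f,g$ are both genuinely quadratic, Theorem \ref{thm:mutual-sep} upgrades this to mutual separation. In case (b), $\{h_0=0\}=\{g=0\}$ already separates $\{f=0\}$, which is precisely the assertion. I expect the transfer lemma to be the main obstacle: one must verify that the \emph{entire} condition (c)/(c$'$)—the sign of $V^TAV$, the range condition on $w$, and the strict sign of the restricted minimum—is a property of $q|_{\{h_0=0\}}$ alone and hence is untouched by adding $sh_0$. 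The cleanest justification is the observation that $q$ and $q+sh_0$ restrict to the \emph{same} function on the affine hyperplane $\{h_0=0\}$, so the minimization problem defining (c)/(c$'$) is literally identical for the two quadratics, while the remaining conditions depend only on the common matrix $A$ and on the fixed affine data of $h_0$.
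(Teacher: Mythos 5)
Your proof is correct, but it takes a genuinely different route from the paper's. The paper justifies this corollary in one sentence, as a combination of Proposition \ref{prop:linear_comb_preserve_separation} (which lets one replace the separator $\{\gamma f+\delta g=0\}$ by any combination with nonzero $G$-coefficient, in particular by $\{g=0\}$ when $\alpha\neq 0$) and Theorem \ref{thm:mutual-sep} (used to flip the roles of separator and separated set, apply the proposition again, and flip back). You instead pass through Theorem \ref{thm:separation_reduce_linear} to extract an affine separator, pin down its form by linear algebra on the matrices ($B=\kappa A$ in case (a), $h\propto g$ in case (b)), and then prove a new transfer lemma --- adding a multiple of the affine separator $h_0$ to the separated quadratic preserves separation by $\{h_0=0\}$ --- by checking invariance of (a)--(c)/(a$'$)--(c$'$) in Corollary \ref{cor:chckable}; your observations that $V^Tc=0$ and $h_0(x_0)=0$ are exactly the points that make this work. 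What your longer route buys is a clean treatment of case (b): an affine function's zero set is a connected hyperplane and can never be separated, so Theorem \ref{thm:mutual-sep} cannot be used to flip roles there, and the paper's one-line argument does not literally cover that case; your transfer lemma replaces the flip and closes the gap. In a final write-up you should also note explicitly that both $f$ and $g$ cannot be affine (else $\alpha f+\beta g$ would have a connected zero set) and state the $f\leftrightarrow g$ symmetry you invoke tacitly in case (b).
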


\section{Discussion}
 {The advantage of our new tool, called "{\em the separation for the level sets}", was demonstrated by recent results in \cite{Quang-Sheu18,Q-C-S2020,Q-S-X2020,Q-S-S-X2020} with many fundamental mathematical insights. We strongly believe that impotent results, as Brickman's Theorem \cite{Brickman61}, Polyak's Theorems \cite{Polyak98},  will be improved  by applying this  tool.  We hope that our tool can be extended, e.g. $\{g=0\}$ separates $\{f\star 0\}$ on a subset of $\mathbb{R}^n$ (see \cite{Q-S-S-X2020}),  to solving other optimization problems involving  quartic polynomials (see \cite{Q-S-X2020}).}

\bibliographystyle{amsplain}

\end{document}